\space\printlist{publisher}}%
\space\printlist{location}}%
\newtheorem{definition}{Definition}  
\newtheorem{theorem}{Theorem}  
\newtheorem{lemma}{Lemma} 
\newtheorem{remark}{Remark} 
\newtheorem{proposition}{Proposition}
\numberwithin{equation}{section}  
\newcommand{\vb}{\boldsymbol{v}}
\newcommand{\wb}{\boldsymbol{w}}
\newcommand{\ub}{\boldsymbol{u}}
\newcommand{\Hb}{\boldsymbol{H}}
\newcommand{\curlb}{\mathbf{curl}}
\newcommand{\Lb}{\boldsymbol{L}}
\newcommand{\Vb}{\boldsymbol{V}}
\newcommand{\Eb}{\boldsymbol{E}}
\newcommand{\nb}{\boldsymbol{n}}
\newcommand{\Jb}{\boldsymbol{J}}
\newcommand{\lb}{\boldsymbol{l}}
\newcommand{\xb}{\boldsymbol{x}}
\newcommand{\xib}{\boldsymbol{\xi}}
\newcommand{\etab}{\boldsymbol{\eta}}
\newcommand{\fb}{\boldsymbol{f}}
\newcommand{\zerob}{\boldsymbol{0}}
\newcommand{\Hcurlb}{\textbf{\textit{H}(curl)}}
\newcommand{\zetab}{\boldsymbol{\zeta}}
\newcommand{\qb}{\boldsymbol{q}}
\newcommand{\pb}{\boldsymbol{p}}
\newcommand{\Pib}{\boldsymbol{\Pi}}
\newcommand{\gb}{\boldsymbol{g}}
\newcommand{\zb}{\boldsymbol{z}}
\newcommand{\phib}{\boldsymbol{\phi}}
\title{A Discontinuous Galerkin Method for \Hcurlb-Elliptic Hemivariational Inequalities}
\author[1,2,3]{Xiajie Huang\thanks{Email: \texttt{xj.huang@sjtu.edu.cn}}}
\author[1]{Fei Wang\thanks{The work of this author was partially supported by
			the National Natural Science Foundation of China (Grant No.\ 12171383). Email: \texttt{feiwang.xjtu@xjtu.edu.cn}}}
\author[4]{Weimin Han\thanks{The work of this author was partially supported by Simons Foundation Collaboration Grants (Grant No.\ 850737). Email: \texttt{weimin-han@uiowa.edu}}}
\author[5]{Min Ling \thanks{Email: \texttt{lingmin@imu.edu.cn}}}
\affil[1]{\small School of Mathematics and Statistics, Xi’an Jiaotong University, Xi’an, Shaanxi 710049, China}
\affil[2]{\small School of Mathematical Sciences, Shanghai Jiao Tong University, Shanghai 200240, China}
\affil[3]{Shanghai Artificial Intelligence Laboratory, Shanghai, China}
\affil[4]{Department of Mathematics, University of Iowa, Iowa City, IA 52242, USA}
\affil[5]{School of Mathematical Sciences, Inner Mongolia University, Hohhot, 010021, China}
\date{}  
\begin{document}
\maketitle

\begin{abstract}  

In this paper, we develop a Discontinuous Galerkin (DG) method for solving $\Hcurlb$-elliptic hemivariational inequalities. By selecting an appropriate numerical flux, we construct an  Interior Penalty Discontinuous Galerkin (IPDG) scheme. A comprehensive numerical analysis of the IPDG method is conducted, addressing key aspects such as consistency, boundedness, stability of the discrete formulation, and the existence, uniqueness, uniform boundedness of the numerical solutions. Building on these properties, we establish a priori error estimates, demonstrating the optimal convergence order of the numerical solutions under suitable solution regularity assumptions. Finally, a numerical example is presented to illustrate the theoretically predicted convergence order and to show the effectiveness of the proposed method.
  
\end{abstract}



\textbf{Keywords}: Discontinuous Galerkin method; \Hcurlb-elliptic hemivariational inequality; non-monotonicity; high-temperature superconductors; error estimates

\section{Introduction}

To describe the mixed state of type-II superconductors, in particular high-temperature superconductors, Bean proposed the critical-state theory \cite{bean1964magnetization,bean1962magnetization}. The basic principle of this model can be stated as follows. When a superconductor is in the mixed state, the magnitude of the current density $|\Jb|$ cannot exceed a critical value $g$. In the regions penetrated by the magnetic field, the current density is $|\Jb| = g$, and the electric field $\Eb$ is parallel to the current. When the magnitude of the current density $\Jb$ is strictly less than the critical value $g$, the electric field $\Eb = \zerob$. Mathematically, this can be expressed as
\begin{equation*}
|\Jb| \leq g; \quad |\Jb| < g \Rightarrow \Eb = \zerob; \quad |\Jb| = g \Rightarrow \Jb = \kappa \Eb \text{ for some } \kappa \geq 0,
\end{equation*}
One can eliminate the unknown parameter $\kappa$ and derive the following equivalent expression:
 \begin{equation}\label{eqn1.0}
 	|\Jb|\leq g, \quad \Jb\cdot\Eb=g|\Eb|.
 \end{equation}
With the use of the notion of convex subdifferential $\partial_c$, the relation can be compactly written as
 \begin{equation}\label{current}
 	\Jb\in\partial_c(g|\Eb|).
 \end{equation}
This is a nonsmooth monotone constitutive law, and the corresponding Maxwell models naturally lead to variational inequalities.

Modeling and analysis of variational inequalities of the Maxwell equations date back to Duvaut and Lions \cite{duvaut1976maxwell} in 1970s. In the context of superconductivities, mathematical models in the form of variational inequalities as extensions of Bean-type critical-state models were developed and studied in the early references \cite{bossavit1994numerical,bossavit1995superconductivity} and \cite{prigozhin1996bean}, and more recently in \cite{yousept2017hyperbolic, winckler2019fully, winckler2020adaptive, Yo20, MWY24}. Other references on Maxwell-type variational inequalities include \cite{yousept2020well} on a well-posedness theory for electromagnetic obstacle problems, \cite{yousept2021maxwell} on Maxwell quasi-variational inequalities with temperature and magnetic-field-dependent critical current, \cite{hensel2022numerical, hensel2023eddy} on numerical analysis of Maxwell obstacle problems in electric shielding and their eddy-current approximation, and \cite{caselli2023quasilinear} on quasilinear variational inequalities in ferromagnetic shielding.

More generally, the critical current density $g$ may depend on temperature and magnetic field strength. It has been shown in \cite{daeumling1990oxygen,cantoni2005anisotropic} that its dependence on the magnetic field strength is often non-monotonic. In such cases, the resulting model leads to Maxwell quasi-variational inequalities rather than standard variational inequalities; see, for example, \cite{yousept2021maxwell}. Another extension, motivated by non-monotonic constitutive behavior, was proposed in \cite{han2022numerical}, where the convex constitutive law~\eqref{current} is replaced by a Clarke-subdifferential relation
\begin{equation} \label{eqn.currentClarke}
\Jb \in \partial\psi(\Eb).
\end{equation}
Let $\Omega\subset\mathbb{R}^3$ be a bounded Lipschitz domain, the function $\psi(\xb, \Eb): \Omega \times \mathbb{R}^3 \rightarrow \mathbb{R}$ is locally Lipschitz continuous with respect to the variable $\Eb$ and this dependence is allowed to be non-convex. To simplify the notation, we write $\psi(\Eb)$ for $\psi(\xb, \Eb)$.  The symbol $\partial \psi(\Eb)$ denotes the Clarke subdifferential of $\psi$ with respect to the variable $\Eb$. Following the implicit Euler time discretization described in \cite{han2022numerical}, we consider an $\Hcurlb$-elliptic hemivariational inequality:  find $\Eb\in\Vb$ such that
\begin{equation*}
a(\Eb,\vb)+\int_{\Omega}\psi^0(\Eb;\vb)\,\mathrm{d}x\ge\langle\fb,\vb \rangle\qquad\forall\,\vb\in \Vb,
\end{equation*}
where
\begin{align*}
\Vb&:=\Hb_0(\curlb, \Omega),\\
a(\Eb,\vb)&:=\int_{\Omega}\epsilon\Eb\cdot\vb\,\mathrm{d}x+\int_{\Omega}\mu^{-1}(\nabla\times\Eb) \cdot (\nabla\times\vb)\,\mathrm{d}x.
\end{align*}
Here $\fb\in\Vb^*$, $\epsilon$ and $\mu$ denote the scaled permittivity and permeability, respectively, and $\psi^0$ is the generalized directional derivative in the sense of Clarke. This semidiscrete stationary problem is the object of the present paper.

Since analytical solutions of hemivariational inequalities are rarely available, their numerical approximations have attracted sustained attention, cf.\ \cite{haslinger1999finite} for a summary account of the early work on the finite element solution of hemivariational inequalities. In this paper, the term ``hemivariational inequalities'' is used to also mean the more general variational-hemivariational inequalities.  In the literature, the reference \cite{MR3284570} is the first paper that provides an optimal first-order error estimate for linear finite element approximations of a hemivariational inequality.   Representative contributions include \cite{han2017numerical,kalita2014semidiscrete,barboteu2015numerical} on finite element analyses for elliptic, parabolic, and hyperbolic HVIs, \cite{wang2020numerical,ling2024numerical} for numerical treatments of history-dependent problems, and \cite{jing2024finite,wang2024optimal} for related problems in fluid models. Recent overviews of the area may be found in \cite{han2019numerical,HFWH25}. Beyond standard finite elements, virtual element methods have been developed in both conforming and nonconforming settings \cite{feng2021virtual,feng2022nonconforming}. More recently, discontinuous Galerkin methods were proposed for elliptic HVIs in semi-permeable media in \cite{wang2020discontinuous} and further extended to contact-mechanics HVIs in \cite{wang2023discontinuous}.

For the specific $\Hcurlb$-elliptic hemivariational inequality with constitutive law $\Jb\in\partial \psi(\Eb)$,  a conforming edge finite element method is analyzed in \cite{han2022numerical} where an optimal-order error estimate is proved. On the other hand, discontinuous Galerkin discretizations for Maxwell equations are by now well established; see, e.g., \cite{perugia2003hp, houston2005interior, grote2007interior}. This makes it natural to ask whether interior penalty DG techniques can be adapted to the present nonconvex $\Hcurlb$ setting. To the best of our knowledge, a DG discretization together with a priori error analysis for the $\Hcurlb$-elliptic hemivariational inequality governed by $\Jb\in\partial\psi(\Eb)$ has not been reported.

In this paper, we develop an interior penalty discontinuous Galerkin method for the $\Hcurlb$-elliptic hemivariational inequality. The DG framework is attractive because of its flexibility in handling locally varying approximation spaces, interfaces, and boundary conditions. We prove the consistency, boundedness, and stability of the discrete formulation, establish existence and uniqueness of the discrete solution, derive a priori error estimates under suitable regularity assumptions, and present numerical experiments that confirm the predicted convergence order.

The rest of the paper is organized as follows. In Section \ref{sec2}, we review some definitions to be used later and recall the mathematical formulation of the $\Hcurlb$-elliptic hemivariational inequality. In Section \ref{sec3}, we present the DG discretization. In Section \ref{sec4}, we carry out an error analysis and derive a priori error estimates for the IPDG method applied to the $\Hcurlb$-elliptic hemivariational inequality. Finally, the last section reports numerical experiments that illustrate the theoretical convergence order established in this paper.

\section{Preliminaries}\label{sec2}

We first recall the notions of the convex subdifferential and the Clarke (generalized) subdifferential.
Let $V$ be a Banach space and denote by $V^*$ its dual space. 

\begin{definition}
\label{Subdifferential}
Assume $\varphi: V\rightarrow \mathbb{R}\cup \{+\infty\}$
is a proper, convex, and lower semicontinuous function on $V$. The set
\[
\partial_c\varphi(u)=\{\xi\in V^*:\varphi(v)-\varphi(u)\geq \langle\xi, v-u\rangle\ \forall\, v \in V\}
\]
is called the convex subdifferential of the function $\varphi$ at $u\in V$. If $\partial_c\varphi(u)\neq \emptyset$, any element $\xi \in \partial_c\varphi(u)$ is called a subgradient of $\varphi$ at $u$.
\end{definition}

\begin{definition}\label{definition}
Assume $\psi: V\rightarrow\mathbb{R}$ is a locally Lipschitz continuous function. The generalized directional derivative of $\psi$ at $u\in V$ in the direction $v\in V$ is defined as
\[
\psi^0(u;v)=\limsup_{w\rightarrow u,\lambda \downarrow 0}\frac{\psi(w+\lambda v)-\psi(w)}{\lambda},
\]
and the Clarke subdifferential of $\psi$ at $u\in V$ is defined as
\[
\partial \psi(u)=\{\xi \in V^*: \psi^0(u;v)\geq \langle\xi,v\rangle\text{ } \forall  v\in V \}.
\]	
\end{definition}

For Clarke (or generalized) subdifferentials, the following properties hold~\cite{clarke1983nonsmooth,clarke1990optimization}:

\noindent (1) The generalized directional derivative can be obtained using the Clarke subdifferential:
\begin{equation}\label{eqnclarke1}
\psi^0(u;v)=\max \{\langle \xi,v\rangle:\xi\in\partial\psi(u) \} \quad \forall u,v\in V.
\end{equation}
(2) The generalized directional derivative is positively homogeneous and subadditive with respect to the direction variable:
\begin{equation}
\psi^0(u;\lambda v)=\lambda\psi^0(u;v) \quad\forall \lambda \geq 0, \text{ } u,v\in V,
\end{equation}
\begin{equation}\label{eq: subadditive}
\psi^0(u;v_1+v_2) \leq \psi^0(u;v_1)+\psi^0(u;v_2) \quad \forall u,v_1,v_2\in V.
\end{equation}
(3) Suppose $\psi_1,\psi_2: V\rightarrow \mathbb{R}$ are both locally Lipschitz continuous.  Then,
\begin{equation}\label{subaddition of Clarke}
\partial (\psi_1+\psi_2)(u) \subset \partial\psi_1(u)+\partial\psi_2(u) \quad \forall u\in V,
\end{equation}
equivalently,
\begin{equation}
(\psi_1+\psi_2)^0(u;v)\leq \psi_1^0(u;v)+\psi_2^0(u;v) \quad \forall u,v\in V.
\end{equation}

We will use Sobolev spaces for the formulation and analysis of the problem. We refer the reader to any standard reference on Sobolev spaces for further details; see, e.g., \cite{AF2003,Bre2011,evans2022partial}. Let $k\ge 0$ be an integer and $1\le p\le \infty$.  Given a domain $\Omega \subset \mathbb{R}^d$, $d\in \mathbb{N}$ being the spatial dimension, let $W^{k,p}(\Omega)$ denote the Sobolev space of $L^p(\Omega)$-functions whose weak derivatives of orders less than or equal to $k$ are in $L^p(\Omega)$, equipped with the standard norm $\|\cdot\|_{W^{k,p}(\Omega)}$. When $p=2$, we write $H^k(\Omega) = W^{k,2}(\Omega)$, with the corresponding norm denoted as $\|\cdot\|_{k,\Omega}$. In particular, for $k=0$, $H^0(\Omega) = L^2(\Omega)$, and its norm is written as $\|\cdot\|_{0,\Omega}$.  Sobolev spaces with vector-valued functions are denoted by boldface symbols, e.g., $\Lb^p(\Omega)$ and $\Hb^k(\Omega)$ are spaces of vector-valued functions with each component belonging to $L^p(\Omega)$ and $H^k(\Omega)$, respectively.  In a three-dimensional domain $\Omega$,
\begin{align*}
\Lb^p(\Omega) &= \{\ub=(u_1, u_2, u_3)^{\top}: u_i\in L^p(\Omega), i=1,2,3\},\\
\Hb^k(\Omega) &= \{\ub=(u_1, u_2, u_3)^{\top}: u_i\in H^k(\Omega), i=1,2,3\}.
\end{align*}
The following integration by parts formula holds for any bounded Lipschitz domain $\mathcal{D}\subset \mathbb{R}^3$:
\begin{align}\label{eqnpar}
\int_\mathcal{D}  (\nabla\times \vb) \cdot\qb\mathrm{d}\xb=\int_\mathcal{D} \vb \cdot(\nabla\times\qb)\mathrm{d}\xb+\int_{\partial\mathcal{D}}(\vb\times\qb)\cdot\nb\mathrm{d}S,\quad \forall \vb, \qb \in \Hb^1(\mathcal{D}).
\end{align}
Here, $\vb\times\qb$ in the boundary integral term denotes the pointwise cross product of the traces of $\vb$ and $\qb$ on $\partial \mathcal{D}$. By the identity 
\[
\nabla\cdot (\vb\times \qb)=(\nabla\times\vb)\cdot\qb-\vb\cdot(\nabla\times\qb),
\]
the formula~\eqref{eqnpar} follows for smooth vector fields from the classical Gauss theorem. The extension to $\vb, \qb \in \Hb^1(\mathcal{D})$ is obtained by density of $\boldsymbol{C}^\infty(\overline{\mathcal{D}})$ in $\Hb^1(\mathcal{D})$, continuity of the trace operator $\tau: \Hb^1(\mathcal{D})\rightarrow\Hb^{1/2}(\partial\mathcal{D})$, and the continuous embedding $\Hb^{1/2}(\partial\mathcal{D})\hookrightarrow \Lb^2(\partial\mathcal{D})$.

For $\Omega \subset \mathbb{R}^3$, we define the following function spaces related to the curl operator:
\begin{align*}
\Hb(\curlb,\Omega)&=\left\{\vb\in\Lb^2(\Omega):\nabla\times \vb\in \Lb^2(\Omega)\right\},\\
\Hb_0(\curlb,\Omega)&=\left\{\vb\in\Hb(\curlb,\Omega): \nb\times \vb=\zerob \ \text{on }\partial\Omega\right\},
\end{align*}
where $\nb$ denotes the unit outward normal vector on $\partial \Omega$ and the curl operator $\nabla\times$ is defined by
\[
\nabla\times \vb=\left(\frac{\partial v_3}{\partial x_2}-\frac{\partial v_2}{\partial x_3},\frac{\partial v_1}{\partial x_3}-\frac{\partial v_3}{\partial x_1},\frac{\partial v_2}{\partial x_1}-\frac{\partial v_1}{\partial x_2}\right)^\top.
\]
The norm associated with these spaces is given by
\[
\|\vb\|_{\curlb, \Omega} = \left(\|\vb\|_{0,\Omega}^2 + \|\nabla\times \vb\|_{0,\Omega}^2\right)^{1/2}.
\]

The hemivariational inequality problem considered in this paper arises from the backward Euler semidiscretization in time of the hyperbolic Maxwell problem (\cite{han2022numerical}). Here we recall the assumptions and weak formulation of this problem; for more details we refer to \cite{han2022numerical}. Let $\Omega \subset \mathbb{R}^3$ be a bounded domain with a Lipschitz continuous boundary $\Gamma$, and let $\nb$ denote the unit outward normal vector on $\Gamma$, which exists a.e. Set
\[
\Vb=\Hb_0(\curlb,\Omega).
\]
In a superconductor, the current density $\Jb$ and the electric field $\Eb$ is assumed to satisfy the relation
\begin{equation} \label{eqnClarkecurrent}
\Jb \in \partial \psi(\Eb).
\end{equation}
Here $\psi:\Omega\times\mathbb{R}^3 \rightarrow \mathbb{R}$ is locally Lipschitz in its second argument, and $\psi(\Eb)$ stands for $\psi(\xb,\Eb)$. The notation $\partial \psi(\Eb)$ denotes the Clarke subdifferential of $\psi$ with respect to the second variable, and $\psi^0$ denotes the corresponding generalized directional derivative. We assume that $\psi$ satisfies the following conditions:
\begin{equation}\label{eqn1.other}
 \begin{cases}
 \text{(a) }\psi(\cdot,\boldsymbol{\xi})\ \text{is measurable in $\Omega$ for any $\boldsymbol{\xi}\in \mathbb{R}^3$, and $\psi(\cdot,\boldsymbol{0})\in L^1(\Omega)$.}\\
 		\text{(b) For a.e.\ $\boldsymbol{x}\in\Omega$, the function $\psi(\boldsymbol{x},\cdot)$ is locally Lipschitz continuous in $\mathbb{R}^3$.}\\
 		\text{(c) There exist constants $c_0,c_1\geq 0$ such that for a.e.\ $\xb\in\Omega$ and any $\xib\in\mathbb{R}^3$,}
 		\\
 		\quad\text{ }\text{ }\text{$|\etab|\leq c_0+c_1|\boldsymbol{\xi}| \quad \forall \etab \in \partial\psi(\boldsymbol{x},\boldsymbol{\xi})$.}\\
 		\text{(d) There exists a constant $m\geq 0$ such that for a.e.\ $\boldsymbol{x} \in \Omega$,} \\
 		\quad\text{ }\text{ } \psi^0(\xb,\xib_1;\xib_2-\xib_1)+\psi^0(\xb,\xib_2;\xib_1-\xib_2) \leq m|\xib_1-\xib_2|^2 \quad \forall \xib_1,\xib_2 \in \mathbb{R}^3.
 	\end{cases}
\end{equation}
Let $\tilde{\epsilon}$ and $\tilde{\mu}$ denote the permittivity and permeability in the time-dependent problem, and let $k>0$ be the time step. As in \cite{han2022numerical}, we define
\begin{equation}\label{eqn3.16} 
\epsilon = k^{-1}\tilde{\epsilon}, \quad \mu = k^{-1}\tilde{\mu}, \quad \epsilon_i = k^{-1}\tilde{\epsilon}_i, \quad \mu_i = k^{-1}\tilde{\mu}_i, \quad i = 0,1. 
\end{equation}
Following \cite{han2022numerical}, we assume that $\epsilon, \mu\in L^\infty(\Omega)$ and that
\[
0 < \epsilon_0 \leq \epsilon(\xb) \leq \epsilon_1 < \infty,
\qquad
0 < \mu_0 \leq \mu(\xb) \leq \mu_1 < \infty
\quad \text{for a.e. }\xb\in\Omega.
\]
For $\Eb,\vb\in \Vb$, define the bilinear form 
\begin{equation}\label{eq: bilinear form for a(.,.)}
a(\Eb,\vb)=\int_{\Omega}\epsilon\Eb\cdot\vb\,\mathrm{d}\xb+\int_{\Omega}\mu^{-1}(\nabla\times\Eb) \cdot (\nabla\times\vb)\,\mathrm{d}\xb.
\end{equation}
The hemivariational inequality under consideration is to find $\Eb\in\Vb$ such that.
\begin{equation}\label{eqn3.17}
\Eb\in\Vb,\quad a(\Eb,\vb)+\int_{\Omega}\psi^0(\Eb;\vb)\,\mathrm{d}\xb\ge\langle\fb,\vb \rangle\quad\forall\,\vb\in \Vb.
\end{equation}
where the load functional $\fb\in\Vb^*$ is induced by a vector field $\tilde{\lb}\in \Lb^2(\Omega)$, that is,
\[
\langle \fb, \vb \rangle=\int_{\Omega}\tilde{\lb}\cdot\vb\, \mathrm{d}\xb,
\qquad \forall\, \vb \in \Vb.
\]
The existence and uniqueness of a solution to \eqref{eqn3.17} were proved in \cite[Theorem 4.3]{han2022numerical}.

\section{Discontinuous Galerkin discretization}
\label{sec3}

In this section, we introduce an IPDG approximation of the hemivariational inequality \eqref{eqn3.17}. The construction follows the standard interior penalty DG discretization of the curl-curl Maxwell operator; see, e.g., \cite{perugia2003hp,houston2005interior,grote2007interior}.

Let $\mathcal{T}_h=\{K\}$ be a shape-regular partition of $\overline{\Omega}$ into tetrahedral or hexahedral elements. Denote by $h_K$ the diameter of an element $K$, and let
\[
h=\max_{K\in\mathcal{T}_h} h_K.
\]
We use $\mathcal{F}_h^\mathcal{I}$ to denote the set of all interior faces, $\mathcal{F}_h^\mathcal{B}$ the set of all boundary faces, and let $\mathcal{F}_h = \mathcal{F}_h^\mathcal{I} \cup \mathcal{F}_h^\mathcal{B}$.

Let $f \in \mathcal{F}_h^\mathcal{I}$ be the common face of two tetrahedral or hexahedral elements $K_1$ and $K_2$, and let $\nb_1$ and $\nb_2$ be the outward unit normal vectors on $f$ for $K_1$ and $K_2$, respectively, cf.\ Figure \ref{Fig5.1}. For a piecewise $\Hb^1$ vector-valued function $\vb$, namely,
\begin{equation*}
\left. \vb\right|_K\in \Hb^1(K) \qquad \forall K\in\mathcal{T}_h,
\end{equation*}
we denote by $\vb_1$ and $\vb_2$ the traces of $\vb$ on $f$ taken from $K_1$ and $K_2$, respectively. The tangential jump and the average of $\vb$ on $f$ are defined as
\begin{equation*}
	\llbracket\vb \rrbracket=\nb_1\times \vb_1 + \nb_2\times \vb_2,\quad \{\vb\}=\frac{\vb_1+\vb_2}{2}.
\end{equation*}
If $f \in \mathcal{F}_h^\mathcal{B}$, we define
\[ \llbracket\vb\rrbracket=\nb \times \vb, \quad \{\vb\}=\vb. \]
For later use, we note that for piecewise $\Hb^1$ vector fields $\vb$ and $\qb$ satisfying
\[
\vb|_K, \qb|_K\in\Hb^1(K) \qquad \forall K\in\mathcal{T}_h,
\]
where $\nb_K$ denotes the outward unit normal vector on $\partial K$, the following formula
\begin{equation}\label{eqn1.11}
\begin{aligned}
\sum_{K\in\mathcal{T}_h}\int_{\partial K}(\vb\times\qb)\cdot\nb_K\,\mathrm{d}S
&=
\sum_{f\in\mathcal{F}_h^\mathcal{I}}\int_f \llbracket\vb\rrbracket\cdot\{\qb\}\,\mathrm{d}S
-\sum_{f\in\mathcal{F}_h}\int_f \llbracket\qb\rrbracket\cdot\{\vb\}\,\mathrm{d}S \\
&=
\sum_{f\in\mathcal{F}_h}\int_f \llbracket\vb\rrbracket\cdot\{\qb\}\,\mathrm{d}S
-\sum_{f\in\mathcal{F}_h^\mathcal{I}}\int_f \llbracket\qb\rrbracket\cdot\{\vb\}\,\mathrm{d}S
\end{aligned}
\end{equation}
holds.

\begin{figure}[H]
	\centering
	\begin{tikzpicture}
		\draw (0.8,0) -- (3.2,2.0);
		\draw[dashed] (3.2,2.0) -- (1.3,2.5);
		\draw[dashed] (1.3,2.5) -- (0.8,0);
		\draw (0.8,0) -- (-1.5,0.8);
		\draw[dashed] (-1.5,0.8) -- (1.3,2.5);
		\draw (-1.5,0.8) -- (1.05,4) -- (3.2,2.0);
		\draw (1.05,4) -- (0.8,0);
		\draw[dashed] (1.05,4) -- (1.3,2.5);
		\node (A) at (0.4,2.0) {$K_1$};
		\node (B) at (2.0,2.0) {$K_2$};
		\node (C) at (1.1,2) {$f$};
		\draw[->] (1.15,2.4) -- (1.8,2.4)node[right]{$\boldsymbol{n}_1$};
		\draw[->] (0.95,1.2) -- (0,1.2)node[left]{$\boldsymbol{n}_2$};
	\end{tikzpicture}
	\caption{Two adjacent tetrahedral elements and a common face}
	\label{Fig5.1}
\end{figure}
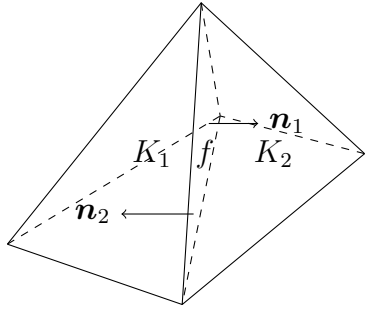

Given a positive integer $l$, we introduce the following DG space
\begin{equation*}
    \Vb_h=\{\vb\in\Lb^2(\Omega):\vb|_K\in\boldsymbol{P}^l(K)\ \forall\, K\in\mathcal{T}_h\}
\end{equation*}
where $\boldsymbol{P}^l(K)=\left[P^l(K)\right]^3$ and $P^l(K)$ is the space of polynomials of degree at most $l$ on $K$. For $\vb_h\in\Vb_h$, the piecewise curl operator $\nabla_h\times$ is defined elementwise by
\[
\nabla_h\times(\vb_h|_K)=\nabla\times(\vb_h|_K)\qquad \forall\,K\in\mathcal{T}_h.
\]

Numerical fluxes $\widehat{\Eb_h}$ and $\widehat{\pb_h}$ are used to approximate the traces of $\Eb$ and $\mu^{-1}\nabla\times\Eb$ on $\partial K$, respectively, while $\Eb_h$ is used to approximate $\Eb$ on $K$. We consider the DG method: find $\Eb_h\in\Vb_h$ such that
\begin{equation}\label{eqn1.15}
a_h(\Eb_h,\vb_h)+\int_\Omega\psi^0(\Eb_h;\vb_h)\mathrm{d}\xb\geq\langle \fb ,\vb_h\rangle\quad\forall\,\vb_h\in\Vb_h,
\end{equation}
where the bilinear form $a_h(\Eb_h,\vb_h)$ is defined as
\begin{equation}\label{eqn1.16}
\begin{aligned}
a_h(\Eb_h,\vb_h)=&\int_\Omega\epsilon\Eb_h\cdot\vb_h\mathrm{d}\xb+\int_\Omega\mu^{-1}(\nabla_h\times\Eb_h)\cdot(\nabla_h\times\vb_h)\mathrm{d}\xb
\\
&+\sum_{f\in\mathcal{F}_h}\int_f\llbracket\widehat{\Eb_h}-\Eb_h\rrbracket\cdot\{\mu^{-1}\nabla_h\times\vb_h\}\mathrm{d}S
\\
&-\sum_{f\in\mathcal{F}_h^{\mathcal{I}}}\int_f\llbracket\mu^{-1}\nabla_h\times\vb_h\rrbracket\cdot\{\widehat{\Eb_h}-\Eb_h\}\mathrm{d}S
\\
&+\sum_{f\in\mathcal{F}_h^{\mathcal{I}}}\int_f \llbracket\widehat{\pb_h}\rrbracket\cdot\{\vb_h\}\mathrm{d}S
-\sum_{f\in\mathcal{F}_h}\int_f\llbracket\vb_h\rrbracket\cdot\{\widehat{\pb_h}\}\mathrm{d}S.
\end{aligned}
\end{equation}
Define $h_f$ on each $f \in \mathcal{F}_h$ by
\begin{align*}
	h_f=\begin{cases}
		\min\{h_K,h_{K'}\}, \quad f\in\mathcal{F}_h^{\mathcal{I}}, \quad f =\partial K \cap \partial K',\\
		h_K, \qquad\qquad\ \,\,\,\,\,\,\,\,\,\,\,\,\,\,\,\, f \in \mathcal{F}_h^{\mathcal{B}},\quad f=\partial K \cap \partial \Omega.
	\end{cases}
\end{align*}
Let $\eta>0$ be a constant and define $\alpha$ on each $f \in \mathcal{F}_h$ by
\[ \alpha|_f=\eta h_f^{-1}.\]
Choose the numerical fluxes as follows:
\begin{equation*}
\begin{cases}
\widehat{\Eb_h}=\{\Eb_h\}\qquad\qquad\qquad\qquad\,\,\,\,\text{on } \mathcal{F}_h^{\mathcal{I}}, \\
{\nb\times\widehat{\Eb_h}}=\zerob \qquad\qquad\qquad\qquad\,\,\,\,\text{on } \mathcal{F}_h^{\mathcal{B}}, \\
\widehat{\pb_h}=\{\mu^{-1}\nabla_h\times\Eb_h\}-\alpha\llbracket\Eb_h\rrbracket \quad \text{on } \mathcal{F}_h.
\end{cases}
\end{equation*}
Using these choices in (\ref{eqn1.16}), we obtain the following bilinear form for the IPDG scheme:
\begin{align}\notag
	a_h(\Eb_h,\vb_h)&=\int_\Omega\epsilon \Eb_h\cdot \vb_h \mathrm{d}\xb+\int_\Omega\mu^{-1}(\nabla_h\times\Eb_h)\cdot(\nabla_h\times\vb_h)\mathrm{d}\xb
	\\\notag
	&\quad{}-\sum_{f\in\mathcal{F}_h}\int_f\llbracket\Eb_h\rrbracket\cdot\{\mu^{-1}\nabla_h\times\vb_h\}\mathrm{d}S-\sum_{f\in\mathcal{F}_h}\int_f\llbracket\vb_h\rrbracket\cdot\{\mu^{-1}\nabla_h\times\Eb_h\}\mathrm{d}S
	\\\label{eqn1.17}
	&\quad{}+\sum_{f\in\mathcal{F}_h}\int_f \alpha\llbracket\Eb_h\rrbracket\cdot\llbracket\vb_h\rrbracket\mathrm{d}S.
\end{align}
With the discrete bilinear form $a_h(\cdot, \cdot)$ defined by (\ref{eqn1.17}), the IPDG scheme to solve the $\Hcurlb$-elliptic hemivariational inequality~\eqref{eqn3.17} is to find $\Eb_h \in \Vb_h$ such that
\begin{equation}\label{eqn1.18}
a_h(\Eb_h,\vb_h)+\int_\Omega\psi^0(\Eb_h;\vb_h)\mathrm{d}\xb \geq\langle \fb,\vb_h\rangle\quad \forall \vb_h\in\Vb_h.
\end{equation}

\section{Error analysis}\label{sec4}

In this section, we present some properties of the IPDG scheme and provide a priori error estimates. On several occasions, we will apply Young's inequality with an arbitrarily small parameter $\varepsilon>0$:
\begin{equation}
a\,b\le \varepsilon\,a^2+\frac{1}{4\varepsilon}\,b^2\quad\forall\,a,b\in\mathbb{R}.
\label{mCS}
\end{equation}

\subsection{Properties of the IPDG scheme}

This subsection is devoted to the consistency, stability, and boundedness of the IPDG scheme, as well as the uniform boundedness of its numerical solution.

\begin{theorem}[Consistency]\label{thm: consistency}
Let $\Eb \in \Vb$ be a solution to the hemivariational inequality~\eqref{eqn3.17}. Assume further that
\begin{equation}\label{eq: consistency regularity assumption}
\mu^{-1} \nabla\times \Eb \in \Hb(\curlb, \Omega)
\end{equation}
and
\begin{equation}\label{eq: consistency regularity assumption2}
\left.\left(\mu^{-1}\nabla\times \Eb\right) \right|_K \in \Hb^1(K) \qquad \forall K\in \mathcal{T}_h.
\end{equation}
Then the following consistency relation holds:
\begin{equation}
a_h(\Eb,\vb_h)+\int_\Omega\psi^0(\Eb;\vb_h)\,\mathrm{d}\xb\ge\langle\fb,\vb_h\rangle
\quad \forall\,\vb_h\in\Vb_h.
\label{eq:cons}
\end{equation}
\end{theorem}
\begin{proof}
The bilinear form $a_h(\Eb,\vb_h)$ is obtained from (\ref{eqn1.17}) by replacing $\Eb_h$ with $\Eb$. Applying (\ref{eqn1.11}), we notice that the fourth term of $a_h(\Eb,\vb_h)$ is
\begin{equation}
  \begin{aligned}
    -\sum_{f\in\mathcal{F}_h}\int_f\llbracket\vb_h\rrbracket\cdot\{\mu^{-1}\nabla\times\Eb\}\mathrm{d}S
    =&-\sum_{K\in\mathcal{T}_h}\int_{\partial K}\mu^{-1}(\vb_h\times\nabla\times\Eb)\cdot \nb_K\mathrm{d}S
    \\
     &-\sum_{f\in\mathcal{F}_h^{\mathcal{I}}}\int_f\llbracket\mu^{-1}\nabla\times\Eb\rrbracket\cdot\{\vb_h\}\mathrm{d}S.
  \end{aligned}
\end{equation}
Since $\Eb \in \Vb$, the jump of $\Eb$ across $\mathcal{F}_h$ vanishes, namely,
\[
\llbracket \Eb \rrbracket = \zerob \qquad \text{on } \mathcal{F}_h.
\]
Furthermore, in view of the regularity assumption (\ref{eq: consistency regularity assumption}), we also have
\[
\llbracket \mu^{-1}\nabla\times\Eb \rrbracket = 0
\qquad \text{on } \mathcal{F}_h^{\mathcal{I}}.
\] Therefore,
\begin{align*}
a_h(\Eb,\vb_h) & =\int_\Omega\epsilon \Eb\cdot\vb_h\mathrm{d}\xb+\int_\Omega\mu^{-1}(\nabla\times\Eb)\cdot(\nabla_h\times\vb_h)\mathrm{d}\xb\\
&\quad{} -\sum_{K\in\mathcal{T}_h}\int_{\partial K}\mu^{-1}(\vb_h\times\nabla\times\Eb)\cdot \nb_K\mathrm{d}S.
\end{align*}
Since
\begin{equation*}
\left.\left(\mu^{-1}\nabla\times \Eb\right)\right|_K \in \Hb^1(K), \qquad \forall K\in\mathcal{T}_h,
\end{equation*}
we may apply the integration by parts formula (\ref{eqnpar}) on each element $K\in\mathcal{T}_h$:
\begin{align*}
\int_K\mu^{-1}(\nabla\times\Eb)\cdot(\nabla\times\vb_h)\mathrm{d}\xb
& =\int_K\nabla\times(\mu^{-1}\nabla\times\Eb)\cdot\vb_h\mathrm{d}\xb\\
&\quad{} -\int_{\partial K}\mu^{-1}((\nabla\times\Eb)\times\vb_h)\cdot\nb_K\mathrm{d}S.
\end{align*}
Thus,
\begin{equation*}
	a_h(\Eb,\vb_h)=\int_\Omega\epsilon \Eb\cdot\vb_h\mathrm{d}\xb+\int_\Omega\nabla\times(\mu^{-1}\nabla\times\Eb)\cdot\vb_h\mathrm{d}\xb.
\end{equation*}

Since \(\Eb\in \Vb\) and \(\mu^{-1}\nabla\times \Eb\in \Hb(\curlb,\Omega)\), we may define
\[
\gb:=\epsilon \Eb+\nabla\times(\mu^{-1}\nabla\times \Eb)\in \Lb^2(\Omega).
\]
From the previous calculations, we already know that
\begin{equation}\label{eq:consistency-g}
a_h(\Eb,\vb_h)=\int_\Omega \gb\cdot \vb_h\,\mathrm d\xb
\qquad \forall\,\vb_h\in \Vb_h.
\end{equation}

Now let \(\phib\in \boldsymbol{C}_0^\infty(\Omega)\subset \Vb\). Since
\(\mu^{-1}\nabla\times \Eb\in \Hb(\curlb,\Omega)\), the integration-by-parts
formula
\[
\int_\Omega \left(\mu^{-1}\nabla\times\Eb\right) \cdot \left(\nabla\times\phib\right)\mathrm{d}\xb = \int_\Omega \nabla\times(\mu^{-1}\nabla\times \Eb)\cdot \phib\,\mathrm d\xb
\]
holds. Therefore, by the definition of the
bilinear form \(a(\cdot,\cdot)\) in~\eqref{eq: bilinear form for a(.,.)}, we have
\[
a(\Eb,\phib)
=
\int_\Omega \epsilon \Eb\cdot \phib\,\mathrm d\xb
+\int_\Omega \nabla\times(\mu^{-1}\nabla\times \Eb)\cdot \phib\,\mathrm d\xb
=
\int_\Omega \gb\cdot \phib\,\mathrm d\xb.
\]
Hence, by the hemivariational inequality~\eqref{eqn3.17},
\[
\int_\Omega \gb\cdot \phib\,\mathrm d\xb
+\int_\Omega \psi^0(\Eb;\phib)\,\mathrm d\xb
\ge
\langle \fb, \phib \rangle
=
\int_{\Omega}\tilde{\lb}\cdot\phib\, \mathrm{d}\xb
\qquad \forall\,\phib\in \boldsymbol{C}_0^\infty(\Omega).
\]

Next, we show that for a.e.\ $\xb\in\Omega$ and all $\zb_1,\zb_2\in\mathbb{R}^3$, it holds that
\begin{equation}\label{eq: Lipchitz estimate}
\bigl|\psi^0(\xb,\Eb(\xb);\zb_1)-\psi^0(\xb,\Eb(\xb);\zb_2)\bigr|
\le
(c_0+c_1|\Eb(\xb)|)\,|\zb_1-\zb_2|.
\end{equation}
Indeed, by the subadditivity of the Clarke generalized directional derivative with respect to the direction variable, taking
\[
u=\Eb(\xb), \qquad v_1=\zb_1-\zb_2, \qquad v_2=\zb_2
\]
in~\eqref{eq: subadditive}, we obtain
\begin{equation*}
\psi^0(\xb,\Eb(\xb);\zb_1)\leq \psi^0(\xb,\Eb(\xb);\zb_1-\zb_2)+\psi^0(\xb,\Eb(\xb);\zb_2),
\end{equation*}
which is equivalent to
\begin{equation}\label{eq: z1 z2 estimate1}
\psi^0(\xb,\Eb(\xb);\zb_1)-\psi^0(\xb,\Eb(\xb);\zb_2)\leq \psi^0(\xb,\Eb(\xb);\zb_1-\zb_2).
\end{equation}
Similarly, we have
\begin{equation}\label{eq: z1 z2 estimate2}
\psi^0(\xb,\Eb(\xb);\zb_2)-\psi^0(\xb,\Eb(\xb);\zb_1)\leq \psi^0(\xb,\Eb(\xb);\zb_2-\zb_1).
\end{equation}
Moreover, by the characterization of the generalized directional derivative in~\eqref{eqnclarke1} and assumption~\eqref{eqn1.other}(c), for any $\xib\in\mathbb{R}^3$,
\begin{equation}\label{eq: z1 z2 estimate3}
\psi^0(\xb,\Eb(\xb);\xib)
=
\max_{\etab \in \partial\psi(\xb,\Eb(\xb))} \etab \cdot \xib
\leq
\max_{\etab \in \partial\psi(\xb,\Eb(\xb))} |\etab|\,|\xib|
\leq
\left(c_0+c_1\left|\Eb(\xb)\right|\right)|\xib|.
\end{equation}
Combining \eqref{eq: z1 z2 estimate1}--\eqref{eq: z1 z2 estimate3}, we immediately deduce~\eqref{eq: Lipchitz estimate}.

Define the mapping
\[
J_{\Eb}(\wb):=\int_\Omega \psi^0(\Eb;\wb)\,\mathrm d\xb
\qquad \wb \in \Lb^2(\Omega).
\]
Since $\Eb\in\Vb$ and $\Omega$ is bounded, it follows that $c_0+c_1|\Eb|\in L^2(\Omega)$. For a.e.\ $\xb\in\Omega$ and any $\wb\in \Lb^2(\Omega)$, taking $\zb_1=\wb(\xb)$ and $\zb_2=\zerob$ in~\eqref{eq: Lipchitz estimate}, we obtain
\[
\bigl|\psi^0(\xb,\Eb(\xb);\wb(\xb))-\psi^0(\xb,\Eb(\xb);\zerob)\bigr|
\le
(c_0+c_1|\Eb(\xb)|)\,|\wb(\xb)|.
\]
Since $\psi^0(\xb,\Eb(\xb);\zerob)=0$, this reduces to
\[
\bigl|\psi^0(\xb,\Eb(\xb);\wb(\xb))\bigr|
\le
(c_0+c_1|\Eb(\xb)|)\,|\wb(\xb)|.
\]
The right-hand side belongs to $L^1(\Omega)$. Hence, by Hölder's inequality,
\[
\bigl|J_{\Eb}(\wb)\bigr|
\leq
\int_\Omega (c_0+c_1|\Eb(\xb)|)\,|\wb(\xb)| \,\mathrm{d}\xb
\leq
\left\|c_0+c_1\left|\Eb\right|\right\|_{0,\Omega}\left\|\wb\right\|_{0,\Omega}
<\infty.
\]
Therefore, $J_{\Eb}(\wb)$ is well defined. We next show that $J_{\Eb}$ is Lipschitz continuous on $\Lb^2(\Omega)$. For any $\wb_1,\wb_2\in\Lb^2(\Omega)$, we have
\[
\begin{aligned}
\left|J_{\Eb}(\wb_1)-J_{\Eb}(\wb_2)\right|
&\leq
\int_\Omega\left|\psi^0(\Eb;\wb_1)-\psi^0(\Eb;\wb_2)\right|\,\mathrm{d}\xb \\
&\leq
\int_\Omega (c_0+c_1|\Eb|)\,|\wb_1-\wb_2| \,\mathrm{d}\xb \\
&\leq
\left\|c_0+c_1\left|\Eb\right|\right\|_{0,\Omega}
\left\|\wb_1-\wb_2\right\|_{0,\Omega}.
\end{aligned}
\]
This shows that $J_{\Eb}$ is Lipschitz continuous on $\Lb^2(\Omega)$, with Lipschitz constant 
\[
\left\|c_0+c_1\left|\Eb\right|\right\|_{0,\Omega}.
\]

Since $\boldsymbol{C}_0^\infty(\Omega)$ is dense in $\Lb^2(\Omega)$, let $\{\phib_n\}\subset \boldsymbol{C}_0^\infty(\Omega)$ be such that
\[
\phib_n\to \wb \qquad \text{in } \Lb^2(\Omega).
\]
Since $\gb,\tilde{\lb}\in \Lb^2(\Omega)$ and $J_{\Eb}$ is Lipschitz continuous on $\Lb^2(\Omega)$, passing to the limit yields
\[
\int_\Omega \gb\cdot \wb\,\mathrm d\xb
+\int_\Omega \psi^0(\Eb;\wb)\,\mathrm d\xb
\ge
\int_\Omega \tilde{\lb}\cdot \wb\,\mathrm d\xb
\qquad \forall\,\wb\in \Lb^2(\Omega).
\]
Taking $\wb=\vb_h\in \Vb_h\subset \Lb^2(\Omega)$ and using \eqref{eq:consistency-g}, we obtain
\[
a_h(\Eb,\vb_h)+\int_\Omega\psi^0(\Eb;\vb_h)\,\mathrm d\xb
\ge
\int_\Omega \tilde{\lb}\cdot \vb_h\,\mathrm d\xb
=
\langle \fb,\vb_h\rangle,
\qquad \forall\,\vb_h\in \Vb_h,
\]
This completes the proof.
\end{proof}

Let
\[\Vb(h) = \Hb_0(\curlb; \Omega) + \Vb_h\]
with the seminorm and norm defined by
\begin{equation*}
{|\vb|_h^2=\sum_{K\in\mathcal{T}_h}\| \nabla_h\times\vb\|_{0,K}^2+\sum_{f\in\mathcal{F}_h}\| \alpha^{1/2}\llbracket\vb\rrbracket\|_{0,f}^2,\quad \|\vb\|_h^2=\|\vb\|_{0,\Omega}^2+|\vb|_h^2.}
\end{equation*}
The bilinear form $a_h(\cdot,\cdot)$ is initially defined on $\Vb_h\times\Vb_h$. To extend it to $\Vb(h)\times\Vb(h)$, we introduce an auxiliary bilinear form as follows \cite{grote2007interior, perugia2002hp}
\begin{align}\label{eqn2.2}
\widetilde{a}_h(\Eb,\vb)&=\int_\Omega\epsilon \Eb\cdot \vb \mathrm{d}\xb+\widetilde{b}_h(\Eb,\vb),
\end{align}
where
\begin{align*}
\widetilde{b}_h(\Eb,\vb)=&\sum_{K\in\mathcal{T}_h}\int_K\mu^{-1}(\nabla_h\times\Eb)\cdot(\nabla_h\times\vb)\mathrm{d}\xb
-\sum_{f\in\mathcal{F}_h}\int_f\llbracket\Eb\rrbracket\cdot\{\mu^{-1}\Pib_h(\nabla_h\times\vb)\}\mathrm{d}S
	\\
&-\sum_{f\in\mathcal{F}_h}\int_f\llbracket\vb\rrbracket\cdot\{\mu^{-1}\Pib_h(\nabla_h\times\Eb)\}\mathrm{d}S+\sum_{f\in\mathcal{F}_h}\int_f\alpha\llbracket\Eb\rrbracket\cdot\llbracket\vb\rrbracket\mathrm{d}S.
\end{align*}
Here, $\Pib_h$ is the $L^2$-projection from $\Lb^2(\Omega)$ onto $\Vb_h$. Note that $\widetilde{a}_h(\cdot,\cdot)$ coincides with $a_h(\cdot,\cdot)$ on $\Vb_h\times\Vb_h$.

\begin{lemma}[Boundedness]\label{lemma:boundedness}
There is a constant $C_b>0$ such that
\begin{equation}
|\widetilde{a}_h(\Eb,\vb)|\leq C_b\|\Eb\|_h \|\vb\|_h\quad \forall\,\Eb,\vb\in\Vb(h).
\label{ah:bd}
\end{equation}
\end{lemma}
\begin{proof}
Using H\"older's inequality, we obtain
\begin{equation*}
\begin{aligned}
\int_K\mu^{-1}(\nabla_h\times\Eb)\cdot(\nabla_h\times\vb)\mathrm{d}\xb
&\leq \mu_0^{-1}\int_K|\nabla_h\times\Eb|\cdot|\nabla_h\times\vb|\mathrm{d}\xb\\
&\leq \mu_0^{-1}\|\nabla_h\times \Eb \|_{0,K}\|\nabla_h\times \vb \|_{0,K}.
\end{aligned}
\end{equation*}
Similarly,
\begin{align*}
\int_f \alpha\llbracket \Eb \rrbracket \cdot \llbracket \vb \rrbracket \mathrm{d}S
&\leq  \| \alpha^{1/2}\llbracket\Eb\rrbracket\|_{0,f}\| \alpha^{1/2}\llbracket\vb\rrbracket\|_{0,f},\\
\int_\Omega\epsilon \Eb\cdot \vb \mathrm{d}\xb & \leq\epsilon_1\|\Eb\|_{0,\Omega}\|\vb\|_{0,\Omega}.
\end{align*}
Now let us bound the third term of the bilinear form $\widetilde{a}_h(\cdot,\cdot)$, which is a modification of the proof in \cite[Lemma 4]{grote2007interior}.
\begin{equation*}
\begin{aligned}
&\sum_{f\in\mathcal{F}_h}\int_f\llbracket\Eb\rrbracket\cdot\{\mu^{-1}\Pib_h(\nabla_h\times\vb)\}\mathrm{d}S
\\
\leq &\mu_0^{-1}\sum_{f\in\mathcal{F}_h}\int_f\left|\alpha^{1/2}\llbracket\Eb\rrbracket\right|\cdot\left|\alpha^{-1/2}\{\Pib_h(\nabla_h\times\vb)\}\right|\mathrm{d}S   \\
\leq &\mu_0^{-1}\sum_{f\in\mathcal{F}_h} \|\alpha^{1/2}\llbracket\Eb\rrbracket\|_{0,f} \cdot \|\alpha^{-1/2}\{\Pib_h(\nabla_h\times\vb)\}\|_{0,f}
\\
\leq &\mu_0^{-1}\left(\sum_{f\in\mathcal{F}_h} \int_f\alpha\left|\llbracket\Eb\rrbracket\right|^2 \mathrm{d}S\right)^{1/2} \cdot
\left(\sum_{f\in\mathcal{F}_h} \int_f\alpha^{-1} \left| \{\Pib_h(\nabla_h\times\vb)\} \right|^2 \mathrm{d}S\right)^{1/2}
\\
= &
\mu_0^{-1}\eta^{-1/2}\left(\sum_{f\in\mathcal{F}_h} \|\alpha^{1/2}\llbracket\Eb\rrbracket\|^2_{0,f}\right)^{1/2} \cdot\left(\sum_{f\in\mathcal{F}_h} \int_f  h_f\left| \{\Pib_h(\nabla_h\times\vb)\} \right|^2 \mathrm{d}S\right)^{1/2}.
\end{aligned}
\end{equation*}
Using the definition of $h_f$, we obtain
\begin{equation*}
\begin{aligned}
\sum_{f\in\mathcal{F}_h} \int_f  h_f \left| \{\Pib_h(\nabla_h\times\vb)\} \right|^2 \mathrm{d}S&\leq \frac{1}{2}\sum_{K\in\mathcal{T}_h}\int_{\partial K}h_K  |\Pib_h(\nabla_h\times \vb)|^2 \mathrm{d}S
\\
& \leq \frac{1}{2}\sum_{K\in\mathcal{T}_h}h_K\| \Pib_h(\nabla_h\times \vb)\|^2_{0,\partial K}.
\end{aligned}
\end{equation*}
Recalling the discrete trace theorem (\cite[Chapter 12.2, Lemma 12.8]{ern2021finite})
\[
\| \wb \|^2_{0,\partial K} \leq \tilde{C}^2 h^{-1}_K \| \wb\|_{0,K}^2
\quad \forall\, \wb\in\boldsymbol{P}^l(K),\ \forall\, K\in\mathcal{T}_h,
\]
where the positive constant $\tilde{C}$ depends only on the mesh regularity, the polynomial degree $l$, and the spatial dimension. From the $L^2$-projection property,
\[
\| \Pib_h \wb \|_{0,K} \leq \|\wb \|_{0,K} \quad \forall\, \wb \in \Lb^2(K).
\]
Combining the above results, we obtain
\begin{equation*}
\frac{1}{2}\sum_{K\in\mathcal{T}_h}h_K\|  \Pib_h(\nabla_h\times \vb)\|^2_{0,\partial K}\leq \frac{1}{2}\tilde{C}^2\sum_{K\in\mathcal{T}_h} \|\nabla_h\times \vb\|^2_{0,K}.
\end{equation*}
Finally, we obtain the bound
\begin{equation}
\begin{aligned}
\sum_{f\in\mathcal{F}_h}\int_f\llbracket\Eb\rrbracket\cdot\{\mu^{-1}\Pib_h(\nabla_h\times\vb)\}\mathrm{d}S
\leq &
\mu_0^{-1}(2\eta)^{-1/2}\tilde{C} \left(\sum_{f\in\mathcal{F}_h} \|\alpha^{1/2}\llbracket\Eb\rrbracket\|^2_{0,f}\right)^{1/2}
\\
&
\cdot \left(\sum_{K\in\mathcal{T}_h} \|\nabla_h\times \vb\|^2_{0,K} \right)^{1/2}.
\end{aligned}
\end{equation}
Similarly, we can bound the fourth term of the bilinear form as
\begin{equation}
\begin{aligned}
\sum_{f\in\mathcal{F}_h}\int_f\llbracket\vb\rrbracket\cdot\{\mu^{-1}\Pib_h(\nabla_h\times\Eb)\}\mathrm{d}S
\leq &
\mu_0^{-1}(2\eta)^{-1/2}\tilde{C} \left(\sum_{f\in\mathcal{F}_h} \|\alpha^{1/2}\llbracket\vb\rrbracket\|^2_{0,f}\right)^{1/2}
\\
&
\cdot \left(\sum_{K\in\mathcal{T}_h} \| \nabla_h\times \Eb\|^2_{0,K} \right)^{1/2}.
\end{aligned}
\end{equation}
Combining these results, we obtain \eqref{ah:bd}.
\end{proof}

\begin{lemma}[Stability]\label{lemma:stability}
Assume $\eta > \max\{1,\mu_1^2\}\,\tilde{C}^2/(2\,\mu_0^2)$.  There is a constant $C_s>0$ such that
\begin{equation}
\widetilde{a}_h(\Eb,\Eb)\geq C_s \|\Eb\|_h^2, \quad \forall \Eb\in\Vb(h).
\label{eqn2.5}
\end{equation}
\end{lemma}
\begin{proof}
\[
\int_\Omega\epsilon \Eb\cdot \Eb \mathrm{d}\xb \geq \epsilon_0\|  \Eb\|_{0,\Omega}^2 ,
\]
\[	\sum_{K\in\mathcal{T}_h}\int_K\mu^{-1}(\nabla_h\times\Eb)\cdot(\nabla_h\times\Eb)\mathrm{d}\xb \geq\mu_1^{-1}\sum_{K\in\mathcal{T}_h} \|\nabla_h\times \Eb \|_{0,K}^2,
\]
\[
\sum_{f\in\mathcal{F}_h}\int_f \alpha\llbracket\Eb\rrbracket\cdot\llbracket\Eb\rrbracket\mathrm{d}S=\sum_{f\in\mathcal{F}_h}\| \alpha^{1/2}\llbracket\Eb\rrbracket\|_{0,f}^2.
\]
Similar to the proof of Lemma \ref{lemma:boundedness}, we have
\begin{align*}
&-2\sum_{f\in\mathcal{F}_h}\int_f\llbracket\Eb\rrbracket\cdot\{\mu^{-1}\Pib_h(\nabla_h\times\Eb)\}\mathrm{d}S
\\
\geq&-2\mu_0^{-1}(2\eta)^{-1/2}\tilde{C}\left(\sum_{f\in\mathcal{F}_h}\| \alpha^{1/2}\llbracket\Eb\rrbracket\|_{0,f}^2\right)^{1/2} \cdot \left( \sum_{K\in\mathcal{T}_h}\|\nabla_h\times \Eb\|_{0,K}^2 \right)^{1/2}
\\
\geq&-\mu_0^{-1}(2\eta)^{-1/2}\tilde{C}\left(\sum_{f\in\mathcal{F}_h}\| \alpha^{1/2}\llbracket\Eb\rrbracket\|_{0,f}^2+\sum_{K\in\mathcal{T}_h}\|\nabla_h\times \Eb\|_{0,K}^2 \right).
\end{align*}
Therefore, when $\eta > \max\{1,\mu_1^2\}\,\tilde{C}^2/(2\,\mu_0^2)$, $\widetilde{a}_h(\Eb,\Eb)$ is bounded from below by
\begin{equation}\label{lower bound of tiled{a}}
\begin{aligned}
&\epsilon_0\|  \Eb\|_{0,\Omega}^2 + \left(1-\mu_0^{-1}(2\eta)^{-1/2}\tilde{C}\right)\sum_{f\in\mathcal{F}_h}\| \alpha^{1/2}\llbracket\Eb\rrbracket\|_{0,f}^2 \\
&+\left(\mu_1^{-1}-\mu_0^{-1}(2\eta)^{-1/2}\tilde{C}\right)\sum_{K\in\mathcal{T}_h}\|\nabla_h\times \Eb\|_{0,K}^2.
\end{aligned}
\end{equation}
Denote $C_0 = 1-\mu_0^{-1}(2\eta)^{-1/2}\tilde{C}$, $C_1 = \mu_1^{-1}-\mu_0^{-1}(2\eta)^{-1/2}\tilde{C}$.  Then,
\[ \widetilde{a}_h(\Eb,\Eb) \geq C_s\left(\sum_{f\in\mathcal{F}_h}\| \alpha^{1/2}\llbracket\Eb\rrbracket\|_{0,f}^2+\sum_{K\in\mathcal{T}_h}\|\nabla_h\times \Eb\|_{0,K}^2+\| \Eb\|_{0,\Omega}^2 \right) \] holds, where $C_s = \min\{\epsilon_0, C_0, C_1\}$.
\end{proof}

The next result establishes the uniform boundedness of the numerical solution defined by the IPDG scheme for the $\Hcurlb$-elliptic hemivariational inequality. We first record two consequences of the assumptions on $\psi$. From (\ref{eqnclarke1}) and (\ref{eqn1.other})\,(c), we have
\begin{equation}\label{eqn1.other2}
\left|\psi^0(\xib_1;\xib_2)\right| \leq \left(c_0+c_1|\xib_1|\right)|\xib_2| \quad \forall \xib_1,\xib_2 \in \mathbb{R}^3.
\end{equation}
Moreover, (\ref{eqn1.other})\,(d) is equivalent to (\cite[p.\ 124]{SM2018} or \cite[Proposition 2.42]{Han2024})
\begin{equation}
(\etab_1-\etab_2)\cdot (\xib_1-\xib_2) \geq -m|\xib_1-\xib_2|^2 \quad \text{a.e. } \xb \in \Omega,\ \forall\,\xib_i \in \mathbb{R}^3,\ \etab_i \in \partial \psi(\xb,\xib_i),\ i=1,2.
\label{property of psi 5}
\end{equation}

\begin{lemma} 
Assume \eqref{eqn1.other}, $m<\epsilon_0$ and $\eta > \max\{1,\mu_1^2\}\,\tilde{C}^2/(2\,\mu_0^2)$. If $\Eb_h\in \Vb_h$ is a solution to the problem \eqref{eqn1.18}, then $\|\Eb_h\|_h$ is uniformly bounded with respect to the mesh size $h$. 
\end{lemma}
\begin{proof}
By setting $\vb_h=-\Eb_h$ in (\ref{eqn1.18}), we obtain
\begin{equation}\label{eqn2.6}
	a_h(\Eb_h,\Eb_h) \leq \int_\Omega\psi^0(\Eb_h;-\Eb_h) \mathrm{d}\xb +\langle \fb,\Eb_h \rangle.
\end{equation}
From assumption (\ref{eqn1.other})\,(d), we have
\begin{equation*}
	\psi^0(\Eb_h;\zerob-\Eb_h) + \psi^0(\zerob;\Eb_h-\zerob) \leq m|\Eb_h|^2,
\end{equation*}
Using (\ref{eqn1.other2}), we obtain
\[  -\psi^0(\zerob;\Eb_h)\leq c_0|\Eb_h|; \]
hence,
\begin{equation*}
\int_\Omega\psi^0(\Eb_h;-\Eb_h) \mathrm{d}\xb \leq m\|\Eb_h\|_{0,\Omega}^2 + \int_\Omega c_0|\Eb_h| \mathrm{d}\xb.
\end{equation*}
Moreover,
\[ \langle \fb,\Eb_h\rangle\leq\| \fb \|_{\Vb^*}\| \Eb_h\|_{\curlb,\Omega}\leq \| \fb \|_{\Vb^*} \|\Eb_h\|_h.\]
By the Cauchy-Schwarz inequality, 
\[
\int_\Omega c_0 |\Eb_h|\mathrm{d}\xb \leq c_0 |\Omega|^{1/2}\| \Eb_h \|_{0,\Omega},
\]
where $|\Omega|$ means the Lebesgue measure of the bounded domain $\Omega$. Combining these inequalities with the lower bound (\ref{lower bound of tiled{a}}) of $\widetilde{a}_h(\Eb_h, \Eb_h)$, we obtain
\begin{align*}
&(\epsilon_0-m)\|  \Eb_h\|_{0,\Omega}^2 + C_0\sum_{f\in\mathcal{F}_h}\| \alpha^{1/2}\llbracket\Eb_{{h}}\rrbracket\|_{0,f}^2 + C_1 \sum_{K\in\mathcal{T}_h}\|\nabla_h\times \Eb_h\|_{0,K}^2	\\
&\leq \left(c_0|\Omega|^{1/2}+\|\fb\|_{\Vb^*}  \right)\|\Eb_h\|_h .
\end{align*}
Since $\epsilon_0-m>0$ and $\eta > \max\{1,\mu_1^2\}\,\tilde{C}^2/(2\,\mu_0^2)$, then
\begin{equation}
\|\Eb_h\|_h \leq \frac{c_0|\Omega|^{1/2}+\|\fb\|_{\Vb^*}}{\min\{\epsilon_0-m, C_0, C_1\}}.
\end{equation}
Therefore, $\|\Eb_h\|_h$ is bounded by a constant independent of $h$.
\end{proof}

\begin{remark}
We note that since $\epsilon_0 = k^{-1} \tilde{\epsilon}_0$, the condition $m<\epsilon_0$ can always be satisfied as long as the time step-size $k$ is sufficiently small.    
\end{remark}

\subsection{Existence and uniqueness}
In this subsection, we establish the existence and uniqueness of the solution to \eqref{eqn1.18} by following the approach developed in \cite{han2020minimization, han2022numerical}. To analyze the existence and uniqueness of the discrete solution later, we first recall the following results.
\begin{lemma}[{\cite[Theorem~3.4]{fan2003generalized}}]
\label{lem:2.2}
Let $V$ be a real Banach space, and let $g\colon V \to \mathbb{R}$ be locally Lipschitz continuous.
Then $g$ is strongly convex on $V$ with a constant $\alpha>0$ if and only if $\partial g$ 
is strongly monotone on $V$ with a constant $2\alpha$, i.e.,
\[
\langle \xi - \eta,\, u - v\rangle \;\ge\; 2\,\alpha \,\|u - v\|_{V}^2 
\quad
\forall\,u,v\in V,\;\xi\in \partial g(u),\;\eta\in \partial g(v).
\]
\end{lemma}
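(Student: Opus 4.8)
The plan is to route both implications through a single intermediate inequality, the quadratic (strong) subgradient inequality: for all $u,v\in V$ and every $\xi\in\partial g(u)$,
\begin{equation*}
g(v)-g(u)\ \ge\ \langle\xi,\,v-u\rangle+\alpha\,\|v-u\|_V^2.
\tag{$\star$}
\end{equation*}
I will show that strong convexity with constant $\alpha$ is equivalent to $(\star)$, and that $(\star)$ is in turn equivalent to $2\alpha$-strong monotonicity of $\partial g$; chaining the two equivalences gives the lemma. Throughout I use the standing definition (from the cited reference) that $g$ is strongly convex with constant $\alpha$ when $g(tu+(1-t)v)\le t\,g(u)+(1-t)\,g(v)-\alpha\,t(1-t)\|u-v\|_V^2$ for all $u,v\in V$ and $t\in[0,1]$.

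First the equivalence of strong convexity with $(\star)$. Because the correction term $-\alpha t(1-t)\|u-v\|_V^2$ is nonpositive, strong convexity forces $g$ to be convex; being also locally Lipschitz, $g$ is Clarke regular, so its generalized directional derivative coincides with the ordinary one and $\partial g$ agrees with the convex subdifferential. Restricting to the segment $t\mapsto g(u+t(v-u))$ and letting $t\downarrow 0$ in the strong-convexity inequality yields $g'(u;v-u)\le g(v)-g(u)-\alpha\|v-u\|_V^2$; combined with $\langle\xi,v-u\rangle\le g^0(u;v-u)=g'(u;v-u)$ for $\xi\in\partial g(u)$ (Definition~\ref{definition} together with \eqref{eqnclarke1}), this is exactly $(\star)$. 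Conversely, given $(\star)$, I fix $z=tu+(1-t)v$ and any $\xi\in\partial g(z)$ (nonempty, since the Clarke subdifferential of a locally Lipschitz function is always nonempty), apply $(\star)$ along $z\to u$ and $z\to v$, and take the convex combination with weights $t$ and $1-t$: the linear terms cancel and the quadratic terms combine to $\alpha t(1-t)\|u-v\|_V^2$, recovering strong convexity.

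With $(\star)$ established, the equivalence with strong monotonicity is the core of the argument. The direction $(\star)\Rightarrow$ monotonicity is immediate: apply $(\star)$ from $u$ to $v$ with $\xi\in\partial g(u)$ and from $v$ to $u$ with $\eta\in\partial g(v)$, then add the two inequalities; the function values cancel and one is left with $\langle\xi-\eta,\,u-v\rangle\ge 2\alpha\|u-v\|_V^2$. The reverse implication is the step I expect to be the main obstacle. A single use of Lebourg's mean value theorem only produces one subgradient at one interior point $u+t^{\ast}(v-u)$, and the resulting estimate carries the uncontrolled factor $t^{\ast}$, which is too weak to yield the sharp coefficient $\alpha$. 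The remedy is to integrate the monotonicity estimate along the whole segment: set $\phi(t)=g(u+t(v-u))$, which is Lipschitz on $[0,1]$, hence absolutely continuous and differentiable a.e.\ with $\phi(1)-\phi(0)=\int_0^1\phi'(t)\,\mathrm{d}t$. At each differentiability point Clarke's chain rule furnishes some $\zeta_t\in\partial g(u+t(v-u))$ with $\phi'(t)=\langle\zeta_t,v-u\rangle$; strong monotonicity applied to $\zeta_t$ and a fixed $\xi\in\partial g(u)$, using $u+t(v-u)-u=t(v-u)$, gives $\phi'(t)\ge\langle\xi,v-u\rangle+2\alpha\,t\,\|v-u\|_V^2$ for a.e.\ $t$. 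Integrating over $[0,1]$ produces precisely $(\star)$, after which the convex-combination argument above delivers strong convexity. The delicate points to check are the a.e.\ differentiability and absolute continuity of $\phi$, and the selection implicit in writing $\phi'(t)=\langle\zeta_t,v-u\rangle$; both are standard consequences of local Lipschitz continuity and of the Clarke chain rule in the cited monographs.
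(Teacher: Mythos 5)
The paper does not prove Lemma~\ref{lem:2.2} at all: it is imported verbatim from the cited reference \cite{fan2003generalized}, so there is no in-paper argument to compare yours against. Judged on its own, your proof is correct and complete. The two-step reduction through the first-order inequality $g(v)-g(u)\ge\langle\xi,v-u\rangle+\alpha\|v-u\|_V^2$ is the standard route, and you handle the two genuinely delicate points properly: (i) in the direction ``strong convexity $\Rightarrow$ first-order inequality'' you need the Clarke subdifferential to coincide with the convex one, which you obtain from the regularity of convex locally Lipschitz functions (strong convexity does force convexity, since the correction term is nonpositive); and (ii) in the direction ``strong monotonicity $\Rightarrow$ first-order inequality'' you correctly recognise that a single application of Lebourg's mean value theorem is too crude, and instead integrate $\phi'(t)=\langle\zeta_t,v-u\rangle\ge\langle\xi,v-u\rangle+2\alpha t\|v-u\|_V^2$ over $[0,1]$, which is exactly where the factor $2$ between the two constants is resolved. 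The selection $\zeta_t\in\partial g(u+t(v-u))$ at a.e.\ $t$ is justified as you say, because $\phi'(t)\in\partial\phi(t)$ wherever the derivative exists and $\partial\phi(t)$ is contained in the (closed interval) image of $\partial g(u+t(v-u))$ under $\zeta\mapsto\langle\zeta,v-u\rangle$. The constant normalisation you adopt ($\alpha t(1-t)\|u-v\|_V^2$ in the convexity inequality versus $2\alpha$ in the monotonicity inequality) is consistent with how the lemma is used in the paper, e.g.\ with Proposition~\ref{prop:2.3}.
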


\begin{proposition}[{\cite[Proposition~2.5]{han2020minimization}}]
\label{prop:2.3}
Let $V$ be a real Hilbert space, and let $g\colon V \to \mathbb{R}$ be a locally Lipschitz continuous, strongly convex functional on $V$ with constant $\alpha>0$. Then there exist two constants 
$\overline{c}_0$ and $\overline{c}_1$ such that
\begin{equation}
\label{eq:2.11}
g(v) \;\ge\; \alpha\,\|v\|_{V}^2 \;+\; \overline{c}_0 \;+\; \overline{c}_1\,\|v\|_{V}
\quad
\forall\,v\in V.
\end{equation}
Consequently, $g(\cdot)$ is coercive on $V$.
\end{proposition}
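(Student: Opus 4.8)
The plan is to reduce the desired estimate to the elementary fact that a continuous convex function on a Hilbert space admits an affine minorant at every point. First I would introduce the shifted functional $h(v) = g(v) - \alpha\|v\|_V^2$ and observe that strong convexity of $g$ with constant $\alpha$ is \emph{exactly} equivalent to ordinary convexity of $h$. This is a one-line computation from the inner-product structure: expanding $\|\lambda u + (1-\lambda)v\|_V^2$ gives
\[
\lambda\|u\|_V^2 + (1-\lambda)\|v\|_V^2 - \|\lambda u + (1-\lambda)v\|_V^2 = \lambda(1-\lambda)\|u-v\|_V^2 ,
\]
so the quadratic defect term $-\alpha\lambda(1-\lambda)\|u-v\|_V^2$ appearing in the strong-convexity inequality for $g$ is precisely what is needed to turn it into the plain convexity inequality for $h$.

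Next I would note that $h$ is locally Lipschitz continuous, being the difference of the locally Lipschitz function $g$ and the $C^1$ map $v\mapsto\alpha\|v\|_V^2$; in particular $h$ is finite and continuous on all of $V$. A finite convex function that is continuous at a point is subdifferentiable there in the sense of the convex subdifferential $\partial_c$, via a Hahn--Banach separation of the point $(0,h(0))$ from the interior of the epigraph of $h$. Applying this at $v=0$, I fix a subgradient $\xi\in\partial_c h(0)\neq\emptyset$, whose defining inequality reads $h(v)\ge h(0)+\langle\xi,v\rangle$ for all $v\in V$.

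Finally, translating back to $g$ and using $h(0)=g(0)$ gives $g(v)=h(v)+\alpha\|v\|_V^2\ge \alpha\|v\|_V^2 + g(0) + \langle\xi,v\rangle$, and bounding the linear term by the duality (Cauchy--Schwarz) estimate $\langle\xi,v\rangle\ge -\|\xi\|_{V^*}\|v\|_V$ yields \eqref{eq:2.11} with $\overline{c}_0=g(0)$ and $\overline{c}_1=-\|\xi\|_{V^*}$. Coercivity is then immediate: since $\alpha>0$, the quadratic term dominates the linear one, so the right-hand side tends to $+\infty$ as $\|v\|_V\to\infty$, whence so does $g(v)$.

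The only genuinely nontrivial ingredient is the existence of the subgradient $\xi$ in the infinite-dimensional setting, which is where I would invoke the standard subdifferentiability of continuous convex functions. If one prefers to stay closer to the tools already set up, the same affine minorant can be obtained by integrating the subdifferential of $g$ along the segment $[0,v]$, combining the strong monotonicity of $\partial g$ from Lemma~\ref{lem:2.2} with Lebourg's mean value theorem; this alternative avoids citing convex subdifferentiability but is technically heavier because of the chain-rule/absolute-continuity argument needed to justify the integration, so I would keep the convex-minorant route as the primary proof.
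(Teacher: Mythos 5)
Your argument is correct. Note that the paper itself does not prove this proposition; it simply imports it by citation from \cite[Proposition~2.5]{han2020minimization}, so there is no in-paper proof to compare against. Your route --- subtracting the quadratic $\alpha\|v\|_V^2$ to reduce strong convexity to ordinary convexity of $h(v)=g(v)-\alpha\|v\|_V^2$ (using the parallelogram-type identity $\lambda\|u\|_V^2+(1-\lambda)\|v\|_V^2-\|\lambda u+(1-\lambda)v\|_V^2=\lambda(1-\lambda)\|u-v\|_V^2$), then invoking subdifferentiability of the continuous convex function $h$ at $0$ to get an affine minorant, and finally bounding $\langle\xi,v\rangle\ge-\|\xi\|_{V^*}\|v\|_V$ --- is the standard proof of exactly this statement and yields the claimed constants $\overline{c}_0=g(0)$, $\overline{c}_1=-\|\xi\|_{V^*}$. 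The only ingredient requiring care, the nonemptiness of the convex subdifferential of $h$ at a point of continuity, is correctly identified and justified by Hahn--Banach separation from the interior of the epigraph. Coercivity then follows since $\alpha>0$ makes the quadratic term dominate. The alternative you sketch (integrating the strongly monotone Clarke subdifferential along $[0,v]$ via Lebourg's mean value theorem) would also work and stays closer to the paper's Lemma~\ref{lem:2.2}, but as you say it is technically heavier; the convex-minorant argument is the cleaner choice.
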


Define an energy functional 
\begin{equation}
\mathcal{E}(\vb_h) = \frac{1}{2}a_h(\vb_h, \vb_h)+\int_\Omega \psi(\xb,\vb_h(\xb)) \mathrm{d}\xb - \langle \fb, \vb_h \rangle, \quad \vb_h \in \Vb_h.
\end{equation}
We consider the minimization problem
\begin{equation}\label{minimization problem}
\Eb_h \in \Vb_h, \quad \mathcal{E}(\Eb_h) = \inf\{\mathcal{E}(\vb_h) \mid \vb_h\in\Vb_h\}.
\end{equation}

\begin{lemma}
Assume \eqref{eqn1.other}, $m<\epsilon_0$ and $\eta > \max\{1,\mu_1^2\}\,\tilde{C}^2/(2\,\mu_0^2)$. Then the functional $\mathcal{E}(\cdot)$ is locally Lipschitz continuous, strongly convex and coercive on $\Vb_h$.
\end{lemma}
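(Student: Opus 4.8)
The plan is to treat $\mathcal{E}$ as the sum of the quadratic form $\tfrac12 a_h(\vb_h,\vb_h)$, the superposition functional $\Psi(\vb_h):=\int_\Omega\psi(\vb_h)\,\mathrm{d}\xb$, and the continuous linear term $-\langle\fb,\vb_h\rangle$, and to establish the three asserted properties one at a time. Throughout I would use that $\Vb_h$ is finite-dimensional, so that $\|\cdot\|_{0,\Omega}$ and $\|\cdot\|_h$ are equivalent on it and $\Vb_h$ is a real Hilbert space under $\|\cdot\|_h$.

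For local Lipschitz continuity I would bound each piece on an arbitrary ball $\{\vb_h\in\Vb_h:\|\vb_h\|_h\le R\}$. The quadratic term is handled through the identity $\tfrac12 a_h(\vb_1,\vb_1)-\tfrac12 a_h(\vb_2,\vb_2)=\tfrac12 a_h(\vb_1+\vb_2,\vb_1-\vb_2)$ together with the boundedness estimate \eqref{ah:bd} of Lemma~\ref{lemma:boundeness}, which gives a Lipschitz constant at most $C_b R$ on the ball. For $\Psi$ I would apply Lebourg's mean value theorem pointwise and the growth bound \eqref{eqn1.other}\,(c) to obtain $|\psi(\vb_1)-\psi(\vb_2)|\le\bigl(c_0+c_1(|\vb_1|+|\vb_2|)\bigr)|\vb_1-\vb_2|$ a.e., then integrate and use the Cauchy--Schwarz inequality to get a bound proportional to $\bigl(c_0|\Omega|^{1/2}+c_1(\|\vb_1\|_{0,\Omega}+\|\vb_2\|_{0,\Omega})\bigr)\|\vb_1-\vb_2\|_{0,\Omega}$, which is Lipschitz on the ball. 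The linear term is globally Lipschitz with constant $\|\fb\|_{\Vb^*}$. Summing yields local Lipschitz continuity of $\mathcal{E}$.

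The core of the argument is strong convexity, which I would obtain through Lemma~\ref{lem:2.2}: it suffices to show that $\partial\mathcal{E}$ is strongly monotone. First I would note that $a_h(\cdot,\cdot)$ is symmetric, since each of the five terms in \eqref{eqn1.17} is invariant under interchanging its two arguments; hence $\tfrac12 a_h(\vb_h,\vb_h)$ is smooth with gradient $a_h(\vb_h,\cdot)$, and the Clarke sum rule (exact because the quadratic and linear parts are strictly differentiable) gives $\partial\mathcal{E}(\vb_i)=a_h(\vb_i,\cdot)-\fb+\partial\Psi(\vb_i)$, where by the Aubin--Clarke theorem every $\zetab_i\in\partial\Psi(\vb_i)$ satisfies $\zetab_i(\xb)\in\partial\psi(\xb,\vb_i(\xb))$ a.e. Thus any $\xi_i\in\partial\mathcal{E}(\vb_i)$ has the form $a_h(\vb_i,\cdot)-\fb+\zetab_i$, and
\[ \langle\xi_1-\xi_2,\vb_1-\vb_2\rangle=a_h(\vb_1-\vb_2,\vb_1-\vb_2)+\langle\zetab_1-\zetab_2,\vb_1-\vb_2\rangle. \]
For the bilinear part I would insert the explicit lower bound \eqref{lower bound of tiled{a}} derived in the proof of Lemma~\ref{lemma:stability} (valid since $\tilde a_h=a_h$ on $\Vb_h$ and $\eta$ meets the stability condition). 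For the $\Psi$ part I would apply the relaxed-monotonicity inequality \eqref{property of psi 5} pointwise with $\xib_i=\vb_i(\xb)$, $\etab_i=\zetab_i(\xb)$ and integrate, giving $\langle\zetab_1-\zetab_2,\vb_1-\vb_2\rangle\ge-m\|\vb_1-\vb_2\|_{0,\Omega}^2$. Adding, the two $\|\cdot\|_{0,\Omega}^2$ contributions merge into $(\epsilon_0-m)\|\vb_1-\vb_2\|_{0,\Omega}^2$, so the whole expression is bounded below by $\min\{\epsilon_0-m,C_0,C_1\}\,\|\vb_1-\vb_2\|_h^2$. This is precisely where the hypothesis $m<\epsilon_0$ is essential, keeping the $L^2$ coefficient positive, while the stability condition on $\eta$ keeps $C_0,C_1>0$. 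Lemma~\ref{lem:2.2} then yields strong convexity with constant $\alpha=\tfrac12\min\{\epsilon_0-m,C_0,C_1\}>0$.

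Coercivity is then immediate: $\mathcal{E}$ has just been shown to be locally Lipschitz and strongly convex on the Hilbert space $(\Vb_h,\|\cdot\|_h)$, so Proposition~\ref{prop:2.3} applies and provides constants $\overline{c}_0,\overline{c}_1$ with $\mathcal{E}(\vb_h)\ge\alpha\|\vb_h\|_h^2+\overline{c}_0+\overline{c}_1\|\vb_h\|_h$, whence $\mathcal{E}(\vb_h)\to+\infty$ as $\|\vb_h\|_h\to\infty$. The step I expect to demand the most care is the strong-convexity estimate: one must justify the passage from the pointwise relaxed monotonicity of $\partial\psi$ to the corresponding inequality for $\partial\Psi$ via Aubin--Clarke, and carefully book-keep which terms land on the $L^2$ norm versus the seminorm $|\cdot|_h$, so that the decisive cancellation producing $\epsilon_0-m$ emerges.
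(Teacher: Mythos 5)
Your proof is correct and follows essentially the same route as the paper: strong monotonicity of $\partial\mathcal{E}$ via the stability lower bound for $a_h$ combined with the relaxed monotonicity \eqref{property of psi 5} and the Aubin--Clarke theorem, then Lemma~\ref{lem:2.2} for strong convexity and Proposition~\ref{prop:2.3} for coercivity. You are in fact more careful than the paper on local Lipschitz continuity (which the paper dismisses as obvious); the only tiny imprecision is that the third and fourth terms of \eqref{eqn1.17} are exchanged with each other, rather than each being individually invariant, under interchange of the arguments, though their sum is indeed symmetric as you need.
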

\begin{proof}
The local Lipschitz continuity of $\mathcal{E}(\cdot)$ follows immediately from the assumptions on $\psi$.
Let us prove the strong convexity.
For this purpose, define a linear operator $A_h \colon \Vb_h \to \Vb_h^*$ by
\begin{equation}
\label{eq:4.3}
\langle A_h \ub_h, \vb_h\rangle \;=\; a_h(\ub_h,\vb_h)
\quad
\forall\,\ub_h,\vb_h \in \Vb_h.
\end{equation}
Applying Lemma~\ref{lemma:boundedness}, we obtain 
\begin{equation*}
\|A_h\ub_h\|_{\Vb_h^*} = \sup_{\|\vb_h\|_h \ne 0} \frac{|\langle A_h\ub_h, \vb_h\rangle|}{\|\vb_h\|_h} = \sup_{\|\vb_h\|_h \ne 0} \frac{|a_h(\ub_h, \vb_h)|}{\|\vb_h\|_h} \leq C_b \|\ub_h\|_h \quad \forall \ub_h \in \Vb_h.
\end{equation*}
By Lemma~\ref{lemma:stability}, 
\begin{equation*}
\begin{aligned}
\langle A_h\ub_h - A_h\vb_h, \ub_h - \vb_h \rangle = a_h(\ub_h - \vb_h, \ub_h - \vb_h) \geq C_s \|\ub_h-\vb_h\|_h^2 \quad \forall  \ub_h, \vb_h \in \Vb_h.
\end{aligned}
\end{equation*}
Hence $A_h \in \mathcal{L}(\Vb_h, \Vb_h^*)$ and it is strongly monotone. Define a functional $\Psi : \Lb^2(\Omega) \to \mathbb{R}$ by
\[
\Psi(\vb)
\;=\;
\int_\Omega \psi(\xb,\vb(\xb))\,\mathrm{d}\xb
\quad
\forall\,\vb \in \Lb^2(\Omega).
\]
Then, by Thm.~4.37 in \cite{migorski2013nonlinear}, under assumption (\ref{eqn1.other}), $\Psi$ is well defined, locally Lipschitz continuous on $\Lb^2(\Omega)$, and
\begin{equation}
\label{eq:4.4}
\partial \Psi(\vb)
\;\subset\;
\int_\Omega \partial \psi\bigl(\vb)\,\mathrm{d}\xb
\end{equation}
in the sense that for any $\xib \in \partial \Psi(\vb)$, there exists a function $\zetab \in \Lb^2(\Omega)$ such that $\zetab(\xb) \in \partial \psi\bigl(\xb,\vb(\xb)\bigr)$ for a.e.\ $\xb \in \Omega$ and
\[
\langle \xib,\,\wb\rangle_{\Lb^2(\Omega)\times \Lb^2(\Omega)}
\;=\;
\int_\Omega \zetab(\xb) \cdot \wb(\xb)\,\mathrm{d}\xb
\quad
\forall\,\wb \in \Lb^2(\Omega).
\]
For $\vb_h \in \Vb_h$ and $\etab \in \partial \mathcal{E}(\vb_h)$, by (\ref{subaddition of Clarke}) we can write
\begin{equation}
\label{eqn eta}
\etab = A_h \vb_h + \xib - \fb,
\quad
\xib \in \partial \Psi(\vb_h).
\end{equation}
Thus, for $i=1,2$, with $\vb_{h,i} \in \Vb_h$ and $\etab_i \in \partial \mathcal{E}(\vb_{h,i})$, by \eqref{eq:4.4} we have $\zetab_i \in \Lb^2(\Omega)$ such that $\zetab_i(\xb) \in \partial \psi\bigl(\xb,\vb_{h,i}(\xb)\bigr)$ for a.e.\ $\xb \in \Omega$ and
\[
\langle \etab_i, \wb\rangle
\;=\;
\langle A_h \vb_{h,i}, \wb\rangle
\;+\;
\int_\Omega \zetab_i(\xb)\cdot \wb(\xb)\,\mathrm{d}\xb
\;-\;
\langle \fb, \wb\rangle
\quad
\forall\,\wb \in \Lb^2(\Omega).
\]
Thus, from (\ref{property of psi 5}) and Lemma~\ref{lemma:stability},
\begin{equation*}
\begin{aligned}
\langle \etab_1 - \etab_2,\; \vb_{h,1} - \vb_{h,2}\rangle = &\langle A_h \vb_{h,1} - A_h\vb_{h,2}, \vb_{h,1} - \vb_{h,2}\rangle+\int_\Omega \bigl(\zetab_1 - \zetab_2\bigr)\cdot (\vb_{h,1} - \vb_{h,2})\,\mathrm{d}\xb
\\
\geq 
&(\epsilon_0-m)\|  \vb_{h,1} - \vb_{h,2}\|_{0,\Omega}^2 + C_0\sum_{f\in\mathcal{F}_h}\| \alpha^{1/2}\llbracket\vb_{h,1} - \vb_{h,2}\rrbracket\|_{0,f}^2 \\
&+C_1\sum_{K\in\mathcal{T}_h}\|\nabla_h\times (\vb_{h,1} - \vb_{h,2})\|_{0,K}^2
\\
\geq& \min\{\epsilon_0-m, C_0, C_1\} \| \vb_{h,1} - \vb_{h,2}\|^{2}_h.
\end{aligned}
\end{equation*}
Thus, by Lemma~\ref{lem:2.2}, $\mathcal{E}(\cdot)$ is strongly convex.
Moreover, by Proposition~\ref{prop:2.3}, $\mathcal{E}(\cdot)$ is coercive on~$\Vb_h$.
\end{proof}

\begin{proposition}
Under the assumptions \eqref{eqn1.other}, $m<\epsilon_0$ and $\eta > \max\{1,\mu_1^2\}\,\tilde{C}^2/(2\,\mu_0^2)$, the minimization problem \eqref{minimization problem} has a unique solution $\Eb_h \in \Vb_h$
\end{proposition}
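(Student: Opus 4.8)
The plan is to invoke the direct method of the calculus of variations, exploiting the three properties of $\mathcal{E}(\cdot)$ established in the preceding lemma — local Lipschitz continuity, strong convexity, and coercivity — together with the fact that $\Vb_h$ is a \emph{finite-dimensional} space (being the DG space of piecewise polynomials of degree $\le l$ over the finite mesh $\mathcal{T}_h$). Existence will follow from coercivity plus finite-dimensional compactness, and uniqueness will follow from strong convexity; no further structural analysis is needed, since all the analytical work is already contained in the preceding lemma.

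For existence, I would first note that Proposition~\ref{prop:2.3} applied to $\mathcal{E}(\cdot)$ yields a bound of the form $\mathcal{E}(\vb_h) \ge \alpha\|\vb_h\|_h^2 + \overline{c}_0 + \overline{c}_1\|\vb_h\|_h$ for all $\vb_h \in \Vb_h$, so that $\mathcal{E}^\ast := \inf_{\vb_h \in \Vb_h}\mathcal{E}(\vb_h)$ is finite (in particular $\mathcal{E}(\cdot)$ is bounded below). Taking a minimizing sequence $\{\vb_h^{(j)}\} \subset \Vb_h$ with $\mathcal{E}(\vb_h^{(j)}) \to \mathcal{E}^\ast$, the same coercivity bound forces $\{\vb_h^{(j)}\}$ to be bounded in $\|\cdot\|_h$. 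Because $\Vb_h$ is finite-dimensional, the Heine--Borel property furnishes a subsequence converging to some $\Eb_h \in \Vb_h$, and the continuity of $\mathcal{E}(\cdot)$ (a consequence of its local Lipschitz continuity) gives $\mathcal{E}(\Eb_h) = \lim_{k}\mathcal{E}(\vb_h^{(j_k)}) = \mathcal{E}^\ast$. Hence $\Eb_h$ solves \eqref{minimization problem}.

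For uniqueness, I would use strong convexity directly. Suppose $\Eb_h^{(1)}$ and $\Eb_h^{(2)}$ are both minimizers, and let $\alpha>0$ be the strong convexity constant provided by the preceding lemma (via Lemma~\ref{lem:2.2}). Evaluating the strong convexity inequality at the midpoint $\lambda = \tfrac{1}{2}$ gives
\begin{equation*}
\mathcal{E}\Bigl(\tfrac{1}{2}\bigl(\Eb_h^{(1)}+\Eb_h^{(2)}\bigr)\Bigr)
\;\le\; \tfrac{1}{2}\mathcal{E}(\Eb_h^{(1)}) + \tfrac{1}{2}\mathcal{E}(\Eb_h^{(2)})
- \tfrac{\alpha}{4}\bigl\|\Eb_h^{(1)}-\Eb_h^{(2)}\bigr\|_h^2
= \mathcal{E}^\ast - \tfrac{\alpha}{4}\bigl\|\Eb_h^{(1)}-\Eb_h^{(2)}\bigr\|_h^2.
\end{equation*}
If the two minimizers were distinct, the right-hand side would be strictly below $\mathcal{E}^\ast$, contradicting the definition of $\mathcal{E}^\ast$ as the infimum. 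Therefore $\Eb_h^{(1)} = \Eb_h^{(2)}$.

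I do not anticipate any genuine obstacle, as this is a standard argument. The only point requiring care is the existence step, where one must explicitly invoke the finite-dimensionality of $\Vb_h$ to pass from boundedness of the minimizing sequence to a convergent subsequence; in an infinite-dimensional setting one would instead need weak lower semicontinuity together with reflexivity, but this is unnecessary here because the strong and weak topologies coincide on $\Vb_h$.
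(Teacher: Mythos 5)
Your proposal is correct and follows essentially the same route as the paper: the paper simply observes that $\mathcal{E}(\cdot)$ is continuous, strictly convex and coercive on $\Vb_h$ and cites a standard existence--uniqueness result for such minimization problems, whereas you spell out that standard argument in full (coercivity plus finite-dimensional compactness for existence, the strong-convexity midpoint inequality for uniqueness). The details you supply are all sound, so the only difference is that you prove what the paper delegates to a reference.
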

\begin{proof}
Since $\mathcal{E}(\cdot)$ is continuous, strictly convex and coercive on $\Vb_h$, from \cite[\S3.3.2]{han2009theoretical} the minimization problem (\ref{minimization problem}) has
a unique solution.
\end{proof}
\begin{theorem}
Assume \eqref{eqn1.other}, $m<\epsilon_0$ and $\eta > \max\{1,\mu_1^2\}\,\tilde{C}^2/(2\,\mu_0^2)$. Then for any $\fb\in \Vb^*$, the problem \eqref{eqn1.18} has a unique solution $\Eb_h\in\Vb_h$, which is also the unique solution of the minimization problem \eqref{minimization problem}.
\end{theorem}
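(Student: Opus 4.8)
The plan is to deduce all three assertions---existence of a solution to \eqref{eqn1.18}, its uniqueness, and its coincidence with the minimizer of \eqref{minimization problem}---from three ingredients already in hand: the preceding Proposition (unique solvability of the minimization problem), the decomposition of $\partial\mathcal{E}$ together with the Aubin--Clarke inclusion \eqref{eq:4.4}, and the coercivity lower bound \eqref{lower bound of tiled{a}} underlying Lemma~\ref{lemma:stability}. The natural route is to show first that the minimizer solves the scheme (giving existence), then to prove directly that \eqref{eqn1.18} has at most one solution, and finally to identify the two.

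For existence, let $\Eb_h\in\Vb_h$ be the unique minimizer furnished by the preceding Proposition. Since $\mathcal{E}(\cdot)$ is strongly convex and locally Lipschitz, a point minimizes $\mathcal{E}$ over the linear space $\Vb_h$ if and only if $\zerob\in\partial\mathcal{E}(\Eb_h)$. Feeding this stationarity into the decomposition \eqref{eqn eta} yields some $\xib\in\partial\Psi(\Eb_h)$ with $A_h\Eb_h+\xib=\fb$ in $\Vb_h^*$; by the inclusion \eqref{eq:4.4} the subgradient $\xib$ is represented by a density $\zetab\in\Lb^2(\Omega)$ with $\zetab(\xb)\in\partial\psi(\xb,\Eb_h(\xb))$ a.e., so that $a_h(\Eb_h,\vb_h)+\int_\Omega\zetab\cdot\vb_h\,\mathrm{d}\xb=\langle\fb,\vb_h\rangle$ for all $\vb_h\in\Vb_h$. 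The pointwise Clarke bound $\zetab(\xb)\cdot\vb_h(\xb)\le\psi^0(\xb,\Eb_h(\xb);\vb_h(\xb))$, a consequence of \eqref{eqnclarke1}, then relaxes this equality into the inequality \eqref{eqn1.18} after integration over $\Omega$. Hence the minimizer is a solution of the IPDG scheme.

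For uniqueness I would argue directly. Suppose $\Eb_h^{(1)},\Eb_h^{(2)}\in\Vb_h$ both satisfy \eqref{eqn1.18}; testing the inequality for $\Eb_h^{(1)}$ with $\vb_h=\Eb_h^{(2)}-\Eb_h^{(1)}$, the inequality for $\Eb_h^{(2)}$ with $\vb_h=\Eb_h^{(1)}-\Eb_h^{(2)}$, and adding, the right-hand sides cancel and the bilinear contributions collapse to $-a_h(\wb,\wb)$ with $\wb=\Eb_h^{(1)}-\Eb_h^{(2)}$. Assumption \eqref{eqn1.other}\,(d), applied pointwise with $\xib_1=\Eb_h^{(1)}(\xb)$, $\xib_2=\Eb_h^{(2)}(\xb)$ and integrated, bounds the two generalized-derivative terms by $m\|\wb\|_{0,\Omega}^2$, giving $a_h(\wb,\wb)\le m\|\wb\|_{0,\Omega}^2$. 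Since $a_h=\tilde{a}_h$ on $\Vb_h\times\Vb_h$, the lower bound \eqref{lower bound of tiled{a}} applies to $a_h(\wb,\wb)$, so $(\epsilon_0-m)\|\wb\|_{0,\Omega}^2+C_0\sum_{f}\|\alpha^{1/2}\llbracket\wb\rrbracket\|_{0,f}^2+C_1\sum_{K}\|\nabla\times\wb\|_{0,K}^2\le 0$. The hypothesis $m<\epsilon_0$ makes $\epsilon_0-m>0$, while $\eta>\max\{1,\mu_1^2\}\,\tilde{C}^2/(2\,\mu_0^2)$ makes $C_0,C_1>0$; as every term is nonnegative they all vanish, so $\|\wb\|_h=0$ and $\Eb_h^{(1)}=\Eb_h^{(2)}$.

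Combining the two steps closes the argument: existence is supplied by the minimizer, uniqueness by the direct estimate, and because that minimizer is itself a solution of the now-unique problem \eqref{eqn1.18}, the solution of \eqref{eqn1.18} must coincide with the unique solution of \eqref{minimization problem}. I expect the main obstacle to lie in the existence step, namely the passage from the abstract stationarity $\zerob\in\partial\mathcal{E}(\Eb_h)$ to the integrated hemivariational inequality: this rests on the Aubin--Clarke inclusion \eqref{eq:4.4} to extract a measurable selection $\zetab\in\partial\psi(\cdot,\Eb_h)$ realizing the subgradient, and on relaxing the equality $\int_\Omega\zetab\cdot\vb_h\,\mathrm{d}\xb=\langle\fb,\vb_h\rangle-a_h(\Eb_h,\vb_h)$ to an inequality via the pointwise bound from \eqref{eqnclarke1}. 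The decisive point is that \eqref{eqn1.18} is an inequality precisely because this final relaxation is an inequality rather than an equality; the remaining steps are routine given Lemma~\ref{lemma:stability} and assumption \eqref{eqn1.other}\,(d).
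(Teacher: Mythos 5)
Your proposal is correct and follows essentially the same route as the paper: existence via the stationarity condition $\zerob\in\partial\mathcal{E}(\Eb_h)$ for the minimizer, the decomposition \eqref{eqn eta} with the Aubin--Clarke inclusion \eqref{eq:4.4}, and the pointwise bound from \eqref{eqnclarke1}; uniqueness by testing the two inequalities against the difference, adding, and invoking \eqref{eqn1.other}\,(d) together with the stability lower bound and $m<\epsilon_0$. If anything, your write-up is slightly more careful than the paper's (you state the stationarity as an equality before relaxing it, where the paper records only the one-sided inequality), but the argument is the same.
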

\begin{proof}
For the solution $\Eb_h\in\Vb_h$ of the minimization problem (\ref{minimization problem}), we apply (\ref{eqn eta}) to get
\[
\langle A_h\Eb_h, \vb_h \rangle + \int_\Omega \zetab(\xb)\cdot \vb_h(\xb) \mathrm{d}\xb - \langle \fb, \vb_h \rangle \geq 0,
\]
where $\zetab \in \Lb^2(\Omega)$ such that $\zetab(\xb) \in \partial \psi\bigl(\xb,\Eb_h(\xb)\bigr)$ for a.e.\ $\xb \in \Omega$. Using the property of the Clarke subdifferential (\ref{eqnclarke1}) we have
\[
\psi^0(\Eb_h(\xb); \vb_h(\xb)) \geq \zetab(\xb)\cdot \vb_h(\xb) \quad \text{a.e. } \xb \in \Omega.
\]
Combining the above two inequalities, we can see that $\Eb_h$ is a solution of the problem (\ref{eqn1.18}). 

Now, let us prove uniqueness of the solution. Assume that $\Eb_h, \tilde{\Eb}_h\in\Vb_h$ are two solutions of the problem (\ref{eqn1.18}). Then we have 
\begin{equation}\label{finite element problem 2}
a_h(\tilde{\Eb}_h,\vb_h)+\int_\Omega\psi^0(\tilde{\Eb}_h;\vb_h)\mathrm{d}\xb \geq\langle \fb,\vb_h\rangle\quad \forall \vb_h\in\Vb_h.
\end{equation}
Take $\vb_h=\tilde{\Eb}_h-\Eb_h$ in (\ref{eqn1.18}) and $\vb_h=\Eb_h-\tilde{\Eb}_h$ in (\ref{finite element problem 2}). Adding the two resulting inequalities and using Lemma \ref{lemma:stability}, we obtain 
\begin{equation*}
\begin{aligned}
&\epsilon_0\|  \tilde{\Eb}_h - \Eb_h\|_{0,\Omega}^2 + C_0\sum_{f\in\mathcal{F}_h}\| \alpha^{1/2}\llbracket\tilde{\Eb}_h - \Eb_h\rrbracket\|_{0,f}^2 +C_1\sum_{K\in\mathcal{T}_h}\|\nabla_h\times (\tilde{\Eb}_h - \Eb_h)\|_{0,K}^2 
\\ 
& \leq a_h(\tilde{\Eb}_h-\Eb_h,\tilde{\Eb}_h-\Eb_h) \leq \int_{\Omega} \left(\psi^0(\Eb_h;\tilde{\Eb}_h - \Eb_h) + \psi^0(\tilde{\Eb}_h; \Eb_h-\tilde{\Eb}_h) \right) \mathrm{d}\xb 
\\
& \leq m \| \tilde{\Eb}_h - \Eb_h \|_{0,\Omega}^2.
\end{aligned}
\end{equation*}
By the smallness condition $m<\epsilon_0$, we deduce that $\tilde{\Eb}_h = \Eb_h$.
\end{proof}

\subsection{A priori error estimate}
In this subsection, we derive a priori error estimates for the IPDG method. To this end, we first present the following lemma, which provides approximation estimates for the second-family N\'{e}d\'{e}lec interpolation operator $\Pib_N$ \cite{monk2003finite,nedelec1980mixed,nedelec1986new}.

\begin{lemma}\label{lemma5.4} 
Assume that $\{\mathcal{T}_h\}$ is a shape-regular family of tetrahedral or affine hexahedral (including parallelepiped) partitions of $\overline{\Omega}$, and assume $\Eb \in \Hb_0(\curlb; \Omega) \cap \Hb^s(\Omega)$ with $ \nabla\times \Eb \in \Hb^s(\Omega)$, where $s > 1/2$. Then the following error estimates hold:
\begin{align}
\|\Eb - \Pib_N \Eb\|_{\curlb,\Omega} & \leq C_N h^{\min\{s, l\}} (\|\Eb\|_{s, \Omega} + \|\nabla\times \Eb\|_{s, \Omega}),
\label{eqn:lemma4-1} \\
\|\Eb - \Pib_N \Eb\|_{h} & \leq C_N h^{\min\{s, l\}} (\|\Eb\|_{s, \Omega} + \|\nabla\times \Eb\|_{s, \Omega}),
\label{energy norm estimate}
\end{align}
where $C_N > 0$ is a constant depending on the mesh regularity and the polynomial degree $l$ but independent of $h$, and for \eqref{energy norm estimate}, $C_N$ also depends on the upper and lower bounds of the coefficients $\mu$ and $\epsilon$.
 
Moreover, if $\Eb \in \Hb_0(\curlb; \Omega) \cap \Hb^{s+1}(\Omega)$ for some number $s > 0$, then
\begin{equation}
 \|\Eb - \Pib_N \Eb\|_{0, \Omega} \leq C_N h^{\min\{s, l\}+1} \|\Eb\|_{s+1, \Omega}. 
 \label{eqn:lemma4-3}
 \end{equation} 
\end{lemma}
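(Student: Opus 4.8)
The plan is to handle the three bounds at two different levels. Estimates \eqref{eqn:lemma4-1} and \eqref{eqn:lemma4-3} are classical interpolation error bounds for the second-type N\'ed\'elec element, and I would establish them with the standard machinery already developed in the cited references \cite{monk2003finite,nedelec1980mixed,nedelec1986new}: map each $K\in\mathcal{T}_h$ affinely to a reference element, use that $\Pib_N$ reproduces the full polynomial space $\boldsymbol{P}^l(K)$ and commutes with the curl operator (the commuting-diagram property, which lets the curl error be bounded by the interpolation error of $\nabla\times\Eb$), and apply the Bramble--Hilbert lemma together with a scaling argument to turn reference-element bounds into $h$-dependent ones. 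The restriction $s>1/2$ is precisely what guarantees that the trace-type degrees of freedom defining $\Pib_N\Eb$ are well posed. Bounding $\|\Eb-\Pib_N\Eb\|_{0,\Omega}$ and $\|\nabla\times(\Eb-\Pib_N\Eb)\|_{0,\Omega}$ separately and adding the squares yields \eqref{eqn:lemma4-1}, while carrying one extra order of regularity through the Bramble--Hilbert step gives the improved $L^2$ bound \eqref{eqn:lemma4-3}.

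The genuinely new point, and the only one specific to the DG setting, is \eqref{energy norm estimate}, which I would reduce to \eqref{eqn:lemma4-1} by a conformity argument. The key observation is that the second-type N\'ed\'elec interpolant is $\Hb(\curlb)$-conforming: the tangential components of $\Pib_N\Eb$ match across every interior face, and since $\Eb\in\Hb_0(\curlb;\Omega)$ the interpolant also preserves the homogeneous tangential boundary condition, so $\Pib_N\Eb\in\Hb_0(\curlb;\Omega)$. Consequently $\wb:=\Eb-\Pib_N\Eb\in\Hb_0(\curlb;\Omega)$ as well, which has two effects in the definition of $\|\cdot\|_h$. First, $\llbracket\wb\rrbracket=\zerob$ on every $f\in\mathcal{F}_h$ (on interior faces because $\nb_1\times\wb_1=-\nb_2\times\wb_2$, and on boundary faces because $\nb\times\wb=\zerob$), so the penalty term $\sum_{f\in\mathcal{F}_h}\|\alpha^{1/2}\llbracket\wb\rrbracket\|_{0,f}^2$ vanishes identically. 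Second, the broken curl of $\wb$ coincides with its global curl, so $\sum_{K\in\mathcal{T}_h}\|\nabla\times\wb\|_{0,K}^2=\|\nabla\times\wb\|_{0,\Omega}^2$. Therefore
\begin{equation*}
\|\Eb-\Pib_N\Eb\|_h^2=\|\wb\|_{0,\Omega}^2+\|\nabla\times\wb\|_{0,\Omega}^2=\|\Eb-\Pib_N\Eb\|_{\curlb,\Omega}^2,
\end{equation*}
and \eqref{energy norm estimate} follows at once from \eqref{eqn:lemma4-1}.

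I do not anticipate a serious obstacle. Estimates \eqref{eqn:lemma4-1} and \eqref{eqn:lemma4-3} are imported essentially verbatim from N\'ed\'elec approximation theory, and the only step requiring attention is confirming that $\Pib_N\Eb$ inherits the homogeneous tangential boundary condition, so that $\Eb-\Pib_N\Eb\in\Hb_0(\curlb;\Omega)$ and the jump terms in $\|\cdot\|_h$ drop out. Should one prefer to avoid invoking exact conformity, the face-jump and broken-curl contributions can alternatively be controlled through trace and inverse inequalities combined with \eqref{eqn:lemma4-1}, which reproduces the same convergence order at the price of a coefficient-dependent constant, consistent with the dependence on the bounds of $\mu$ and $\epsilon$ recorded in the statement. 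The only genuine delicacy is the well-definedness of $\Pib_N\Eb$ under the weak regularity $s>1/2$, which I would settle by citing the corresponding well-posedness results in the N\'ed\'elec references rather than reproving them.
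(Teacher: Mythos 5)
Your proposal is correct and follows essentially the same route as the paper: the bounds \eqref{eqn:lemma4-1} and \eqref{eqn:lemma4-3} are imported from the standard N\'ed\'elec interpolation theory (the paper cites Monk and Houston et al.\ for exactly these), and \eqref{energy norm estimate} is deduced from \eqref{eqn:lemma4-1} by observing that $\Pib_N\Eb$ is $\Hb_0(\curlb;\Omega)$-conforming, so the jumps $\llbracket\Eb-\Pib_N\Eb\rrbracket$ vanish and the broken curl agrees with the global one. Your explicit identity $\|\Eb-\Pib_N\Eb\|_h=\|\Eb-\Pib_N\Eb\|_{\curlb,\Omega}$ is in fact slightly sharper than the paper's phrasing, since with the norm $\|\cdot\|_h$ as defined no dependence on the bounds of $\epsilon$ and $\mu$ actually enters.
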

 
The estimate \eqref{eqn:lemma4-1} can be found in \cite{monk2003finite}, and \eqref{eqn:lemma4-3} appears in \cite[Lemma 4.1]{houston2005interior}. The error bound \eqref{energy norm estimate} follows from \eqref{eqn:lemma4-1} because the coefficients $\epsilon$ and $\mu$ are bounded and, for $\Eb \in \Hb_0(\curlb;\Omega)$, the tangential jump $\llbracket\Eb - \Pib_N \Eb\rrbracket$ vanishes.

For $\vb\in\Vb(h)$, we define
\begin{equation}\label{eqn2.9}
r_h(\Eb;\vb)=\sum_{f\in\mathcal{F}_h}\int_f\llbracket\vb\rrbracket \cdot \left\{\mu^{-1}\nabla\times\Eb-\mu^{-1}\Pib_h(\nabla\times\Eb)\right\}\mathrm{d}S.
\end{equation}
For $r_h(\Eb; \vb)$ to be well-defined, the condition $\nabla\times \Eb \in \Hb^s(\Omega)$, where $s > 1/2$, is assumed. Under this condition, we have the following result \cite{grote2007interior}.

\begin{lemma}\label{lemma2.6} 
Assume $\nabla\times \Eb \in \Hb^s(\Omega)$, $s > 1/2$.  Then, 
\[ |r_h(\Eb;\vb)|\leq C_R h^{\min\{s,l+1\}}|\vb|_h\|\nabla\times\Eb\|_{s,\Omega}\quad\forall\,\vb \in \Vb(h),\]
where the constant $C_R$ is independent of the mesh size but depends on $\eta$, the upper and lower bounds of the coefficient $\mu$, the mesh regularity, and the polynomial order $l$. 
\end{lemma}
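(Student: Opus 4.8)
The plan is to estimate the quantity $r_h(\Eb;\vb)$ defined in \eqref{eqn2.9} face-by-face, exploiting that the integrand involves the difference $\mu^{-1}\nabla\times\Eb - \mu^{-1}\Pib_h(\nabla\times\Eb)$, i.e.\ the projection error of $\nabla\times\Eb$. Since $\mu$ is bounded below by $\mu_0$ and above by $\mu_1$, I would first factor out these bounds and reduce to estimating $\sum_{f}\int_f \llbracket\vb\rrbracket\cdot\{\nabla\times\Eb-\Pib_h(\nabla\times\Eb)\}\,\mathrm{d}S$. Applying the Cauchy-Schwarz inequality on each face, and inserting the weight $\alpha^{1/2}=\eta^{1/2}h_f^{-1/2}$ together with its inverse $\alpha^{-1/2}=\eta^{-1/2}h_f^{1/2}$, I would split the sum as
\[
\left(\sum_{f\in\mathcal{F}_h}\|\alpha^{1/2}\llbracket\vb\rrbracket\|_{0,f}^2\right)^{1/2}\left(\sum_{f\in\mathcal{F}_h}\|\alpha^{-1/2}\{\nabla\times\Eb-\Pib_h(\nabla\times\Eb)\}\|_{0,f}^2\right)^{1/2}.
\]
The first factor is bounded by $|\vb|_h$ directly from the definition of the seminorm, so the whole task reduces to controlling the second factor.

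For the second factor I would convert the face-based $L^2$ norms into element-based quantities. Using $\alpha^{-1}=\eta^{-1}h_f$ and the definition of $h_f$ (so that $h_f\le h_K$ for each element $K$ adjacent to $f$), I would dominate $\sum_f \|\alpha^{-1/2}\{\cdots\}\|_{0,f}^2$ by a constant times $\sum_{K\in\mathcal{T}_h} h_K\|\nabla\times\Eb-\Pib_h(\nabla\times\Eb)\|_{0,\partial K}^2$. The essential estimate is then a scaled trace inequality for the projection error: for $\wb:=\nabla\times\Eb\in\Hb^s(\Omega)$ with $s>1/2$, one has the standard approximation-and-trace bound
\[
h_K\|\wb-\Pib_h\wb\|_{0,\partial K}^2 \le C\, h_K^{2\min\{s,l+1\}}\|\wb\|_{s,K}^2,
\]
which combines the $L^2$-projection error estimate on $K$ with a trace theorem on the reference element, scaled appropriately. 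This is where the exponent $\min\{s,l+1\}$ and the regularity threshold $s>1/2$ (needed for traces of $\Hb^s$ functions to make sense) both enter.

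Summing this bound over all $K$, factoring out $h^{2\min\{s,l+1\}}$ (using $h_K\le h$), and taking square roots yields the second factor bounded by $C\,h^{\min\{s,l+1\}}\|\nabla\times\Eb\|_{s,\Omega}$. Combining with the first factor $|\vb|_h$ and absorbing the $\mu_0,\mu_1$, $\eta$, and mesh-regularity constants into a single $C_R$ gives the claimed estimate. I expect the main obstacle to be the scaled trace/approximation inequality for the projection error: one must carefully track how the trace constant and the inverse/interpolation estimates scale with $h_K$ under the shape-regularity assumption, and ensure the regularity $s>1/2$ is genuinely sufficient for the face integrals to be well-defined (as already flagged in the remark preceding the lemma). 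The remaining manipulations — Cauchy-Schwarz, weight insertion, and the passage from faces to elements via the definition of $h_f$ — are routine and parallel the boundedness argument in Lemma~\ref{lemma:boundeness}.
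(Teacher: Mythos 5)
Your argument is correct, and in fact the paper offers no proof of this lemma at all --- it simply cites \cite{grote2007interior}, where essentially the argument you outline appears. The two-step structure you propose (weighted Cauchy--Schwarz with $\alpha^{\pm1/2}$ to peel off the factor $\bigl(\sum_f\|\alpha^{1/2}\llbracket\vb\rrbracket\|_{0,f}^2\bigr)^{1/2}\le|\vb|_h$, then passage from faces to element boundaries and the scaled trace/approximation bound $h_K\|\wb-\Pib_h\wb\|_{0,\partial K}^2\le C\,h_K^{2\min\{s,l+1\}}\|\wb\|_{s,K}^2$ for $\wb=\nabla\times\Eb\in\Hb^s$, $s>1/2$) is exactly the standard mechanism behind the cited result, and the exponent bookkeeping is right: the face trace of the projection error costs $h_K^{-1/2}$, which is exactly compensated by the $h_f^{1/2}$ coming from $\alpha^{-1/2}$, leaving the rate $h^{\min\{s,l+1\}}$. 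The only ingredient you should state explicitly rather than just name is the combined trace-plus-approximation estimate for the elementwise $L^2$-projection on a shape-regular element (obtained from the scaled trace inequality $\|u\|_{0,\partial K}^2\le C(h_K^{-1}\|u\|_{0,K}^2+h_K^{2\sigma-1}|u|_{\sigma,K}^2)$ applied to $u=\wb-\Pib_h\wb$), since that is where the hypothesis $s>1/2$ and the dependence of $C_R$ on the mesh regularity and the polynomial degree actually enter; everything else is routine and parallels Lemma~\ref{lemma:boundeness} as you say.
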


\begin{theorem}\label{theorem4.2}
Assume that ${\mathcal{T}_h}$ is a shape-regular family of tetrahedral or affine hexahedral (including parallelepiped) partitions of $\overline{\Omega}$. Let $\Eb$ and $\Eb_h$ be the solutions of \eqref{eqn3.17} and \eqref{eqn1.18}, respectively. Suppose that
\[
\mu^{-1}\nabla\times \Eb \in \Hb(\curlb,\Omega), 
\qquad
\left.\left(\mu^{-1}\nabla\times \Eb\right)\right|_K \in \Hb^1(K)
\quad \text{for all } K\in\mathcal{T}_h.
\]
Assume that the penalty parameter $\eta$ satisfies
\[
\eta > \frac{\max\{1,\mu_1^2\}\,\tilde{C}^2}{2\,\mu_0^2},
\]
and that $m<\epsilon_0$.

If $\Eb \in \Hb^{s}(\Omega)$ and $\nabla\times\Eb \in \Hb^{s}(\Omega)$ with $s>1/2$, then there exists a positive constant $C$, independent of $h$, such that
\begin{equation}\label{error_bd2}
\| \Eb-\Eb_h\|_h \le Ch^{\min\{s,l\}/2}.
\end{equation}
Moreover, if $\Eb \in \Hb^{s+1}(\Omega)$ with $s>1/2$, then
\begin{equation}
\| \Eb-\Eb_h\|_h \le Ch^{\left(\min\{s,l\}+1\right)/2}.
\label{error_bd}
\end{equation}
\end{theorem}

\begin{proof}
Denote $\Eb_I=\Pib_N\Eb$ and write
\begin{equation}\label{eqn2.10}
\widetilde{a}_h(\Eb_I-\Eb_h,\Eb_I-\Eb_h)=T_1+T_2,
\end{equation}
where $T_1=\widetilde{a}_h(\Eb_I-\Eb,\Eb_I-\Eb_h)$ and $T_2=\widetilde{a}_h(\Eb-\Eb_h,\Eb_I-\Eb_h)$. Using Young’s inequality \eqref{mCS} and the boundedness estimate for $\widetilde{a}_h$, we have, for any small $\varepsilon>0$,
\begin{equation}\label{eqn2.11}
T_1\leq C_b\|\Eb_I-\Eb\|_h \|\Eb_I-\Eb_h\|_h  \leq \frac{\varepsilon}{4}\|\Eb_I-\Eb_h\|_h^2+\frac{C_b^2}{\varepsilon}\|\Eb_I-\Eb\|_h^2.
\end{equation}
Note that on $\mathcal{F}_h$, $\llbracket\Eb\rrbracket=\zerob$, $\{\Eb\}=\Eb$, and $ \llbracket\nabla\times\Eb\rrbracket=\zerob$. Thus,
\begin{equation}\label{eq: tilde a_h(E,EI, Eh)}
\begin{aligned}
\widetilde{a}_h(\Eb,\Eb_I-\Eb_h)=&\int_\Omega\epsilon \Eb \cdot (\Eb_I-\Eb_h) \,\mathrm{d}\xb+\sum_{K\in\mathcal{T}_h}\int_K\mu^{-1} (\nabla\times \Eb) \cdot (\nabla_h\times(\Eb_I-\Eb_h))\,\mathrm{d}\xb
\\
&-\sum_{f\in\mathcal{F}_h}\int_f\llbracket\Eb_I-\Eb_h\rrbracket\cdot \{\mu^{-1}\Pib_h(\nabla\times\Eb)\}\,\mathrm{d}S
\\
= &a_h\left(\Eb, \Eb_I-\Eb_h\right)+\sum_{f\in\mathcal{F}_h}\int_f\llbracket\Eb_I-\Eb_h\rrbracket\cdot \{\mu^{-1}\nabla\times\Eb\}\,\mathrm{d}S 
\\ 
&-\sum_{f\in\mathcal{F}_h}\int_f\llbracket\Eb_I-\Eb_h\rrbracket\cdot \{\mu^{-1}\Pib_h(\nabla\times\Eb)\}\,\mathrm{d}S.
\end{aligned}
\end{equation}
Since $\Eb_h-\Eb_I \in \Vb_h$, it follows from Theorem~\ref{thm: consistency} that
\begin{equation*}
a_h(\Eb,\Eb_h-\Eb_I)+\int_\Omega\psi^0(\Eb;\Eb_h-\Eb_I)\,\mathrm{d}\xb\ge\langle\fb,\Eb_h-\Eb_I\rangle,
\end{equation*}
that is,
\begin{equation}\label{eq: consistency2}
\langle\fb,\Eb_I-\Eb_h\rangle+\int_\Omega\psi^0(\Eb;\Eb_h-\Eb_I)\,\mathrm{d}\xb \geq a_h(\Eb,\Eb_I-\Eb_h).
\end{equation}
Combining \eqref{eq: tilde a_h(E,EI, Eh)} with \eqref{eq: consistency2}, we obtain
\begin{equation}\label{eqn2.12}
\begin{aligned}
\widetilde{a}_h(\Eb,\Eb_I-\Eb_h)\leq & \langle\fb,\Eb_I-\Eb_h\rangle+\int_\Omega\psi^0(\Eb;\Eb_h-\Eb_I)\,\mathrm{d}\xb
\\
& + \sum_{f\in\mathcal{F}_h}\int_f\llbracket\Eb_I-\Eb_h\rrbracket \cdot \left\{\mu^{-1}\nabla\times\Eb-\mu^{-1}\Pib_h(\nabla\times\Eb)\right\}\,\mathrm{d}S 
\\
= & \langle\fb,\Eb_I-\Eb_h\rangle + \int_\Omega \psi^0(\Eb;\Eb_h-\Eb_I)\,\mathrm{d}\xb+r_h(\Eb;\Eb_I-\Eb_h).
\end{aligned}
\end{equation}
Letting $\vb_h=\Eb_I-\Eb_h$ in (\ref{eqn1.18}), we get
\begin{equation}\label{eqn2.13}
	-\widetilde{a}_h(\Eb_h,\Eb_I-\Eb_h)=-a_h(\Eb_h,\Eb_I-\Eb_h)\leq \int_\Omega\psi^0(\Eb_h;\Eb_I-\Eb_h)\mathrm{d}\xb-\langle\fb,\Eb_I-\Eb_h\rangle.
\end{equation}
Combining (\ref{eqn2.12}) and (\ref{eqn2.13}), and using the subadditivity of the generalized directional derivative, we obtain
\begin{equation}
\begin{aligned}
T_2 \leq&  \int_\Omega \psi^0(\Eb;\Eb_h-\Eb_I)\mathrm{d}\xb +\int_\Omega\psi^0(\Eb_h;\Eb_I-\Eb_h)\mathrm{d}\xb + r_h(\Eb;\Eb_I-\Eb_h) \\
\leq 
&\int_\Omega \psi^0(\Eb;\Eb_h-\Eb)\mathrm{d}\xb+\int_\Omega \psi^0(\Eb;\Eb-\Eb_I)\mathrm{d}\xb
\\
&+\int_\Omega\psi^0(\Eb_h;\Eb_I-\Eb)\mathrm{d}\xb+\int_\Omega\psi^0(\Eb_h;\Eb-\Eb_h)\mathrm{d}\xb+r_h(\Eb;\Eb_I-\Eb_h).
\end{aligned}
\end{equation}
Using (\ref{eqn1.other})\,(d), we obtain
\begin{align}\notag
\int_\Omega \psi^0(\Eb;\Eb_h-\Eb)\mathrm{d}\xb+\int_\Omega\psi^0(\Eb_h;\Eb-\Eb_h)\mathrm{d}\xb\leq m\|\Eb-\Eb_h\|^2_{0,\Omega}.
\end{align}
By the triangle inequality $\|\Eb-\Eb_h\|_{0,\Omega}\le\|\Eb-\Eb_I\|_{0,\Omega}+\|\Eb_I-\Eb_h\|_{0,\Omega}$ and Young’s inequality \eqref{mCS},
\[ \|\Eb-\Eb_h\|^2_{0,\Omega}\le (1+\varepsilon)\|\Eb_I-\Eb_h\|_{0,\Omega}^2+(1+1/\varepsilon)\|\Eb-\Eb_I\|_{0,\Omega}^2.\]
Therefore,
\begin{equation*}
\begin{aligned}
&\int_\Omega \psi^0(\Eb;\Eb_h-\Eb)\mathrm{d}\xb+\int_\Omega\psi^0(\Eb_h;\Eb-\Eb_h)\mathrm{d}\xb
\\
\leq&(1+\varepsilon) m\|\Eb_I-\Eb_h\|_{0,\Omega}^2+(1+1/\varepsilon)m\|\Eb-\Eb_I\|_{0,\Omega}^2.
\end{aligned}
\end{equation*}
Using (\ref{eqn1.other2}), we have
\begin{align}
\int_\Omega \psi^0(\Eb;\Eb-\Eb_I)\mathrm{d}\xb \leq\int_\Omega(c_0+c_1|\Eb|)|\Eb-\Eb_I|\mathrm{d}\xb,	\\
\int_\Omega \psi^0(\Eb_h;\Eb_I-\Eb)\mathrm{d}\xb \leq\int_\Omega(c_0+c_1|\Eb_h|)|\Eb-\Eb_I|\mathrm{d}\xb.
\end{align}
Since $\Omega$ is bounded, and since $\Eb_h$ is uniformly bounded, the Cauchy-Schwarz inequality yields constants $C_{Eb},C_{Ehb}>0$ such that
\begin{equation*}
\int_\Omega(c_0+c_1|\Eb|)|\Eb-\Eb_I|\mathrm{d}\xb \leq \left(\int_\Omega(c_0+c_1|\Eb|)^2\mathrm{d}\xb\right)^{1/2} \| \Eb-\Eb_I\|_{0,\Omega}= C_{Eb} \| \Eb-\Eb_I\|_{0,\Omega},
\end{equation*}
\begin{equation*}
\int_\Omega(c_0+c_1|\Eb_h|)|\Eb-\Eb_I|\mathrm{d}\xb \leq \left(\int_\Omega(c_0+c_1|\Eb_h|)^2\mathrm{d}\xb\right)^{1/2} \| \Eb-\Eb_I\|_{0,\Omega}\le C_{Ehb} \| \Eb-\Eb_I\|_{0,\Omega}.
\end{equation*}
Therefore,
\begin{align}
T_2&\leq (C_{Eb}+C_{Ehb})\|\Eb-\Eb_I\|_{0,\Omega}+(1+\varepsilon)m\|\Eb_I-\Eb_h\|_{0,\Omega}^2+(1+1/\varepsilon)m\|\Eb-\Eb_I\|_{0,\Omega}^2
\nonumber\\
&\quad{} +r_h(\Eb;\Eb_I-\Eb_h).\label{eqn2.20}
\end{align}
Since $\Eb\in\Hb^{s+1}(\Omega)$ implies $\nabla\times\Eb\in\Hb^{s}(\Omega)$, the regularity assumption imposed on $\Eb$ in the theorem already ensures that $\nabla\times\Eb\in\Hb^{s}(\Omega)$ for $s>1/2$. Combining (\ref{eqn2.10}), (\ref{eqn2.11}), (\ref{eqn2.20}), Lemma \ref{lemma:stability} and Lemma \ref{lemma2.6}, we obtain
\begin{equation}\label{eqn:error bound}
\begin{aligned}
&\epsilon_0\|  \Eb_I-\Eb_h\|_{0,\Omega}^2 + C_0\sum_{f\in\mathcal{F}_h}\| \alpha^{1/2}\llbracket\Eb_I-\Eb_h\rrbracket\|_{0,f}^2 + C_1 \sum_{K\in\mathcal{T}_h}\|\nabla_h\times (\Eb_I-\Eb_h)\|_{0,K}^2 
\\
& \leq  (C_{Eb}+C_{Ehb})\|\Eb-\Eb_I\|_{0,\Omega}+(1+\varepsilon)m \|\Eb_I-\Eb_h\|_{0,\Omega}^2+(1+1/\varepsilon)m\|\Eb-\Eb_I\|_{0,\Omega}^2 \\
& \quad +\frac{\varepsilon}{4}\|\Eb_I-\Eb_h\|_h^2+\frac{C_b^2}{\varepsilon}\|\Eb_I-\Eb\|_h^2+C_R h^{\min\{s,l+1\}}|\Eb_I-\Eb_h|_h\|\nabla\times \Eb\|_{s,\Omega}.
\end{aligned}
\end{equation}
Using Young’s inequality~\eqref{mCS}, we have, for any $\varepsilon>0$,
\begin{equation}
	C_R h^{\min\{s,l+1\}}|\Eb_I-\Eb_h|_h \|\nabla\times\Eb\|_{s,\Omega} \leq \frac{\varepsilon}{4}|\Eb_I-\Eb_h|_h^2+\frac{C_R^2 h^{2\min\{s,l+1\}}}{\varepsilon}\|\nabla\times\Eb\|_{s,\Omega}^2.
\end{equation}
Since $m<\epsilon_0$, we can choose a sufficiently small $\varepsilon>0$ such that
$\epsilon_0 - m - (m+1/4)\varepsilon>0$ and $\min\{C_0, C_1\} > \varepsilon/4$.  
When $\Eb \in \Hb^{s}(\Omega)$ and $\nabla\times\Eb \in \Hb^{s}(\Omega)$ with $s>1/2$, the term $\|\Eb-\Eb_I\|_{0,\Omega}$ on the right-hand side of \eqref{eqn:error bound} does not yield the optimal convergence order. In this case, it can only be controlled by \eqref{energy norm estimate}. Invoking the estimate \eqref{energy norm estimate}, we deduce from \eqref{eqn:error bound} that
\[
\|\Eb_I-\Eb_h\|_h\leq C\,h^{\min\{s,l\}/2}.
\]
Moreover, when $\Eb \in \Hb^{s+1}(\Omega)$, the term $\|\Eb-\Eb_I\|_{0,\Omega}$ attains the optimal convergence order given by \eqref{eqn:lemma4-3}. Therefore, we obtain
\[
\|\Eb_I-\Eb_h\|_h\leq C\,h^{\left(\min\{s,l\}+1\right)/2}.
\]

Finally, we use the triangle inequality $\|\Eb-\Eb_h\|_h\le\|\Eb-\Eb_I\|_h +\|\Eb_I-\Eb_h\|_h$ to conclude the error bound ~\eqref{error_bd2} and ~\eqref{error_bd}.
\end{proof}

In particular, when linear elements are used and the solution $\Eb$ of the hemivariational inequality~\eqref{eqn3.17} satisfies
\[
\Eb \in \Hb^2(\Omega),
\]
we obtain the optimal first-order convergence estimate:
\[
\| \Eb-\Eb_h\|_h \leq C\,h.
\]

\section{Numerical example}
This paper primarily focuses on the three-dimensional case. However, a two-dimensional vector field $\Eb(x, y) = (E_1(x, y), E_2(x, y))$ can be embedded into three dimensions by setting 
\[
\Eb(x, y, z) = (E_1(x, y), E_2(x, y), 0).
\]
Similarly, a two-dimensional normal vector $\nb = (n_1, n_2)$ can be identified with $\nb = (n_1, n_2, 0)$ in $\mathbb{R}^3$. Under this identification, we have
\[ \nabla\times \Eb = \left(0, 0, \frac{\partial E_2}{\partial x} - \frac{\partial E_1}{\partial y}\right),
\quad \nb \times \Eb = \left(0, 0, n_1 E_2 - n_2 E_1\right). 
\]
Therefore, the three-dimensional analysis presented in this paper applies directly to the two-dimensional case. In this section, we report numerical results for a two-dimensional example.

Let $\Omega=(0,1)^2$, $\epsilon=1$, $\mu=1$, and the source term
\begin{equation*}
\fb=\left(
\begin{array}{c}
	(1+2\pi^2)\cos(\pi x)\sin(\pi y)\\
	-(1+2\pi^2)\sin(\pi x)\cos(\pi y)\\
\end{array}
\right).
\end{equation*}
The function $\psi$ is chosen as follows:
\begin{equation*}
\omega(t)=(a-b)e^{-\beta t}+b, \quad \psi(\Eb)=\int_0^{|\Eb|} \omega(t) \mathrm{d}t,
\end{equation*}
where $a > b > 0$ and $\beta > 0$. It can be verified that this function $\psi$ satisfies the conditions in (\ref{eqn1.other}), where the parameter $m$ in (\ref{eqn1.other})\,(d) is given by $m = \beta(a - b)$. We choose the parameters $a=0.004$, $b=0.002$, and $\beta=100$ in the function $\omega(t)$.

To solve the problem \eqref{eqn1.18}, we employ the Uzawa iterative algorithm (cf.\ \cite{barboteu2013analytical}). It is stated as Algorithm \ref{Uzawa iterative}. We choose the penalty parameter $\eta=10^3$. Since the analytic solution of the inequality problem is unknown, we will use the numerical solution with a grid size of $h=2^{-6}$ as a reference solution to compute the error of the numerical solution with coarser grid sizes.

\begin{algorithm}
\caption{Uzawa iteration for the $\Hcurlb$-elliptic HVI}
\label{Uzawa iterative}
\KwIn{Maximal number of iteration steps $l_{\rm max}$, error tolerance $\mathrm{tol}>0$.}
\KwOut{The numerical solution of the inequality problem $\Eb^*$.}
Find $\Eb^0_h \in \Vb_h$ such that $a_h(\Eb_h^0,\vb_h)=\langle \fb,\vb_h\rangle\quad \forall\, \vb_h\in\Vb_h$\;
\For{{$l = 1$ \textbf{ to } $l_{\rm max}$}}{
Find $\Eb_h^l \in \Vb_h$ such that $\displaystyle a_h(\Eb_h^l,\vb_h)+\int_\Omega \boldsymbol{\lambda}^l_h \cdot \vb_h =\langle \fb,\vb_h\rangle\quad \forall \vb_h\in\Vb_h$, where $\boldsymbol{\lambda}^l_h\in \omega(|\Eb^{l-1}_h|)\partial|\Eb^l_h|$\;
\If{$\| \Eb^{l}_h-\Eb^{l-1}_h\|_{0,\Omega} \leq \mathrm{tol} \|\Eb^{l-1}_h\|_{0,\Omega}$ \,\, and \, \,$\| \boldsymbol{\lambda}^{l}_h-\boldsymbol{\lambda}^{l-1}_h\|_{0,\Omega} \leq \mathrm{tol} \|\boldsymbol{\lambda}^{l-1}_h\|_{0,\Omega}$}
{
break
}
}
$\Eb^*=\Eb_h^l$\;
\Return{$\Eb^*$}\;
\end{algorithm}

Numerical results are reported in Table \ref{table2}.  It is observed that for this example, the numerical solutions exhibit second-order convergence in the $L^2$-norm and first-order convergence in the energy norm $\|\cdot\|_h$ with respect to the grid size $h$. The observed convergence rate in the energy norm $\|\cdot\|_h$ agrees with the theoretical result established in Theorem \ref{theorem4.2}. Figure \ref{Fig5} illustrates these convergence rates. Figure \ref{Fig4} displays the streamline plots of the numerical solutions for the inequality problem with grid sizes $h = 2^{-4}$ and $h = 2^{-5}$.

\begin{table}[H]
\centering
\caption{Errors and convergence orders of the numerical solutions}
\label{table2}
\setlength{\tabcolsep}{7.5mm}{
	\begin{tabular}{ccccc}
		\toprule
		$h$&$\| \Eb-\Eb_h\|_{0,\Omega}$&Order&$\| \Eb-\Eb_h\|_h$&Order  \\
		\midrule
		$2^{-1}$&2.1929e-1&-&1.5957&-  \\
		$2^{-2}$&6.0856e-2&1.8494&8.3933e-1&0.9269 \\
		$2^{-3}$&1.5644e-2&1.9598&4.2970e-1&0.9659  \\
		$2^{-4}$&3.8917e-3&2.0071&2.1416e-1&1.0047  \\
		\bottomrule
\end{tabular}}
\end{table}

\begin{figure}
    \centering
    \includegraphics[width=0.7\linewidth]{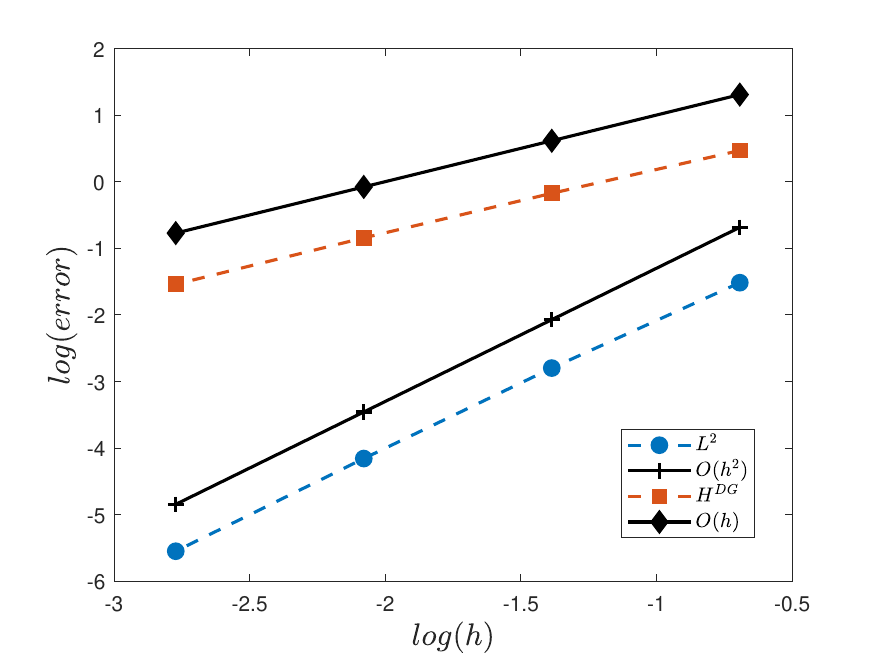}
    \caption{Convergence of the numerical solutions}
    \label{Fig5}
\end{figure}

\begin{figure}[H]
    \subcaptionbox{$\Eb_h$ for $h = 2^{-4}$}{\includegraphics[width=0.45\textwidth]{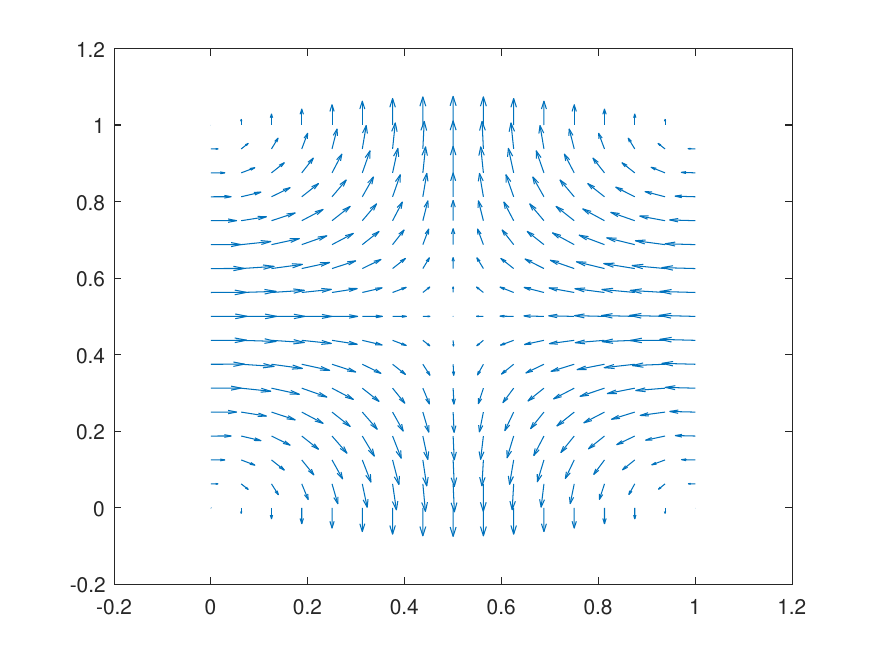}}
    \hspace{0.05\textwidth}
    \subcaptionbox{$\Eb_h$ for $h = 2^{-5}$}{\includegraphics[width=0.45\textwidth]{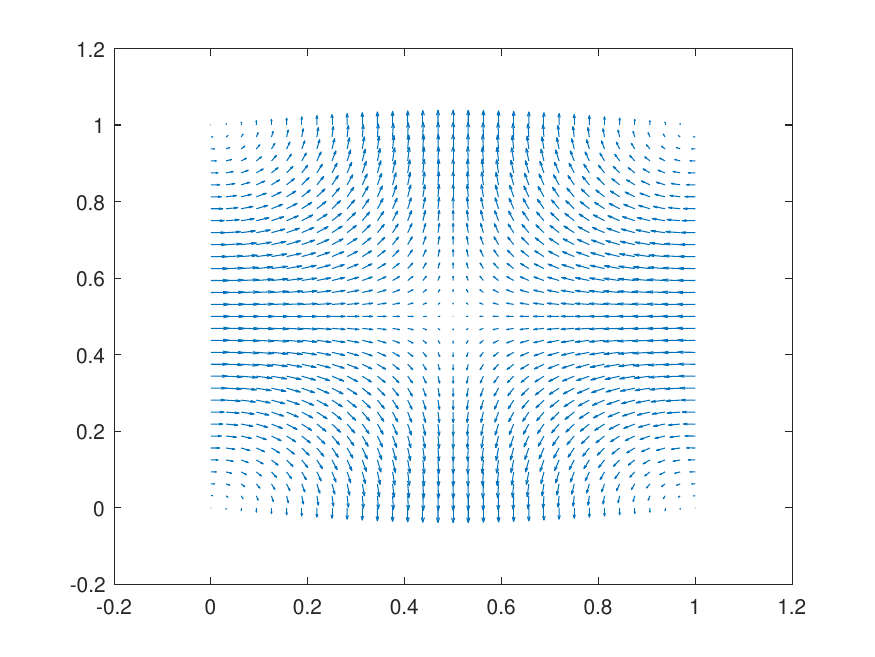}}
    \caption{Streamlines of numerical solutions}
    \label{Fig4}
\end{figure}

\bigskip

\noindent {\bf Data Availability Statement.} No data was used for the research described in the article.
\bigskip

\noindent {\bf Declaration Statement.}
The authors have no conflicts of interest to declare that are relevant to the content of this article.
\bigskip

\noindent {\bf Acknowledgments}. We thank the two anonymous referees for their valuable comments and suggestions
on the original version of the manuscript. 


\printbibliography

\end{document}